\documentclass[10pt,a4paper,oneside]{article}

\usepackage{graphicx}
\usepackage{epstopdf}
\usepackage[noblocks]{authblk}
\usepackage{amssymb}
\usepackage{amsmath}
\usepackage{amsfonts}
\usepackage{mathtools}
\usepackage{amsfonts}
\usepackage{color}
\usepackage{listings}
\usepackage{fancyvrb}
\usepackage{float}
\usepackage{framed}
\usepackage{multirow}
\usepackage{hyperref}
\usepackage{url}  
\usepackage{multicol}
\usepackage{subcaption}
\setlength{\columnsep}{1cm}
\usepackage{algorithm}
\usepackage{algpseudocode}
\usepackage{pifont}
\usepackage{algorithm}
\usepackage{colortbl}
\usepackage{framed}
\usepackage{listings}
\usepackage{bm}
\usepackage{eurosym}
\usepackage{amssymb}
\usepackage{suffix}
\usepackage{tikz}
\usetikzlibrary{matrix}

\usepackage[a4paper,left=3.5cm,right=3.5cm,top=3.5cm,bottom=3.5cm]{geometry}

\newtheorem{theorem}{Theorem}[section]
\newtheorem{proposition}[theorem]{Proposition}

\newenvironment{proof}{\noindent{\bf Proof:}}{\hfill$\square$}


\DeclarePairedDelimiterX\MeijerM[3]{\lparen}{\rparen}%
{\begin{smallmatrix}#1 \\ #2\end{smallmatrix}\delimsize\vert\,#3}

\newcommand\MeijerG[8][]{%
  G^{\,#2,#3}_{#4,#5}\MeijerM[#1]{#6}{#7}{#8}}

\WithSuffix\newcommand\MeijerG*[7]{%
  G^{\,#1,#2}_{#3,#4}\MeijerM*{#5}{#6}{#7}}

\DeclareMathOperator{\csch}{csch}
\DeclareMathOperator{\sech}{sech}

\title{Numerical methods and arbitrary-precision computation of the Lerch transcendent}
\author{Guillermo Navas-Palencia} 
\date{}
\begin{document}
\maketitle

\begin{abstract}
We examine the use of the Euler-Maclaurin formula and new derived uniform asymptotic expansions for the numerical evaluation of the Lerch transcendent $\Phi(z, s, a)$ for $z, s, a \in \mathbb{C}$ to arbitrary precision. A detailed analysis of these expansions is accompanied by rigorous error bounds. A complete scheme of computation for large and small values of the parameters and argument is described along with algorithmic details to achieve high performance. The described algorithm has been extensively tested in different regimes of the parameters and compared with current state-of-the-art codes.
An open source implementation of $\Phi(z, s, a)$ based on the algorithms described in this paper is available.
\end{abstract}

\section{Introduction}

The Lerch transcendent, also called Hurwitz-Lerch zeta function, which is named after the Czech mathematician Mathias Lerch (1860 - 1922) is defined by means of the Dirichlet series \cite{apostol1951}
\begin{equation}\label{l_series}
\Phi(z,s,a) = \sum_{k=0}^{\infty} \frac{z^k}{(k+a)^s},
\end{equation}
where $\Phi(z,s,a)$ is absolutely convergent for $|z| \le 1, \; a \notin \mathbb{Z}_0^{-}$ or  $\Re(s) > 1, \; |z| = 1$ and is defined elsewhere by analytic continuation. The Lerch transcendent serves as a unified framework for the study of various particular cases of special functions in number theory  such as polygamma functions, polylogarithms, Dirichlet $L$-functions and certain number-theoretical constants. 
The Lerch transcendent is related to Lipschitz-Lerch zeta function by the functional equation
\begin{equation*}
\mathcal{R}(a, x, s) = \Phi(e^{2\pi i x}, s, a).
\end{equation*} 
This function was introduced and investigated by Lerch \cite{Lerch1887} and Lipschitz \cite{Lipschitz1857}, where the latter studied general Euler integrals including the Lerch zeta function. Subsequently, many authors have studied properties of these functions. Among the recent investigations on the analytic properties of Lerch zeta function, we remark the work conducted by Laurin\v{c}ikas and Garunk\v{s}tis in \cite{Laurincikas2002}. 

The Lerch transcendent and their special cases are ubiquitous in theoretical physics. They play a relevant role in particle physics, thermodynamics and statistical mechanics, being present, for instance, in Bose-Einstein condensation distribution \cite{Griffin1997} and integrals of the Fermi-Dirac distribution. They also occur in quantum field theory, in particular in quantum electrodynamic bound state calculations \cite{Jentschura1997}. Regarding mathematical applications, the Lerch zeta function can be used to evaluate Dirichlet $L$-series of the form
\begin{equation*}
L(s, \chi) = \sum_{k=1}^{\infty} \frac{\chi(k)}{k^s},
\end{equation*}
where $\chi : (\mathbb{Z}/q\mathbb{Z})^* \rightarrow \mathbb{C}$ is a Dirichlet character  and $q$ a natural number, thus the above summation is also expressible as a combination of Hurwitz zeta functions $\zeta(s, a)$ or polygamma functions for $s \in \mathbb{N}$
\begin{equation*}
L(s, \chi)  = \sum_{r=1}^q \chi(r) \sum_{n=0}^{\infty} \frac{1}{(r+nq)^s} = q^{-s} \sum_{r= 1}^q \chi(r) \zeta(s, r/q).
\end{equation*}

The Lerch transcendent occasionally occurs in statistics, for instance, it provides an analytic expression for the central moments of the geometric distribution.

Over the last two decades several authors have devised new series representations to extend the regime of computation of the Lerch transcendent. Complete asymptotic expansions including error bounds of $\Phi(z, s, a)$ for large $a$ and large $z$ are derived in \cite{Ferreira2004}. More recently, an \textit{exponentially-improved} expansion for the Lerch zeta function in large $a$ asymptotic was examined in \cite{Paris2016}. A remarkable and extensive review of properties, identities and numerical methods for the computation of the Lerch transcendent and their special cases was carried out by R. Crandall in  \cite{Crandall2012}. In addition, we mention two important convergent series: the Hasse's convergent series expansion in \cite{Guillera2008} given by
\begin{equation*}
(1-z) \Phi(z,s,a) = \sum_{n=0}^{\infty} \left(\frac{-z}{1-z}\right)^n \sum_{k=0}^n (-1)^k \binom{n}{k} (a+k)^{-s},
\end{equation*}
which holds for $s,z \in \mathbb{C}$ with $\Re(z) < 1/2$ and Erd\'{e}lyi-series representation \cite{Erdelyi1953}
\begin{equation*}
z^a \Phi(z,s,a) = \sum_{k=0}^{\infty} \zeta(s-k, a) \frac{\log^k(z)}{k!} + \Gamma(1-s) (-\log (z))^{s-1},
\end{equation*}
where $s$ is  not a positive integer, and for parameter $a\in (0,1]$, $|\log(z)|< 2\pi$, the series representation is linearly convergent.

Finally, the Hermite-type integral representation is given by
\begin{align}\label{int2}
\Phi(z, s, a) &= \frac{1}{2a^s} + \frac{(-\log(z))^{s-1}}{z^a} \Gamma(1-s, -a\log(z))\nonumber\\
&+ 2 \int_0^{\infty} \frac{\sin(s \arctan(t/a) - t \log(z))}{(a^2+t^2)^{s/2} (e^{2\pi t} - 1)} \mathop{dt}, \quad \Re(a) > 0.
\end{align}

In this paper, we derive complete new uniform asymptotic expansions of $\Phi(z, s, a)$ for large order of the parameters $a$, $s$ and argument $z$, with special emphasis on the less investigated case $\Re(z) \gg 0$. The starting point for our asymptotic expansions is the integral representation in (\ref{int2}). Additionally, a careful treatment of the Euler-Maclaurin formula is considered along with the calculation of a rigorous error bound. A significant effort have been made to develop uniform asymptotic expansions with tractable coefficients in terms of known entities and amenable to arbitrary-precision computations. An extensive discussion on algorithmic aspects for their successful implementation is also provided.

The outline of the paper is the following: in Section 2 we study the main numerical methods considered for the numerical evaluation of $\Phi(z,s,a)$, including error bounds. Then, in Section 3, we discuss in detail implementation aspects, several heuristics and performance issues. We also devise an effective algorithm that permits computation to arbitrary-precision in an extensive region of the function's domain. In Section 4, we provide numerical calculations and compare the present implementation with open source and commercial state-of-the-art libraries. Finally, in Section 5, we discuss possible enhancements and present our conclusions.

\section{Numerical methods}

\subsection{Euler-Maclaurin formula}

We briefly summarized the Euler-Maclaurin formula and refer to \cite{Cohen2007} for a formal proof. We closely follow the expository style in \cite{Johansson2015}. Let us suppose that $f$ is an analytic function on a closed domain $[N, U]$ where $N, U \in \mathbb{Z}$, and let $M$ be a positive integer. Let $B_n$ denote the $n$-th Bernoulli number and $\tilde{B}_{2M}(t) = B_n(t-\lfloor t\rfloor)$ denote the $n$-th periodic Bernoulli polynomials. The Euler-Maclaurin summation formula states that 
\begin{equation}\label{sum_series_em}
\sum_{k=N}^U f(k) = I + T + R
\end{equation}
where
\begin{align}
I &= \int_N^U f(t) \mathop{dt}\\
T &= \frac{1}{2} (f(N) + f(U)) + \sum_{k=1}^M \frac{B_{2k}}{(2k)!} \left(f^{(2k-1)}(N) - f^{(2k-1)}(U)\right)\\
R &= -\int_N^U \frac{\tilde{B}_{2M}(t)}{(2M)!} f^{(2M)}(t) \mathop{dt}.\label{sum_series_em_remainder} 
\end{align}
If $f$ decreases sufficiently rapid, letting $U \to \infty$ the above equations remain valid.

\begin{proposition}
The Euler-Maclaurin summation formula for the Lerch transcendent is given by
\begin{equation}
\Phi(z,s,a) = S + I + T + R,
\end{equation}
where
\begin{align}
S &= \sum_{k=0}^{N-1} \frac{z^k}{(k+a)^s},\\
I &= \frac{(-\log(z))^{s-1}}{z^a} \Gamma(1-s, -(a+N)\log(z)), \label{em_int}\\
T &= \frac{z^N}{(a+N)^s} \left( \frac{1}{2} + \sum_{k=1}^M \frac{B_{2k}}{(2k)!} \frac{U(-2k+1, -2k+2-s, -(a+N)\log(z))}{(a+N)^{2k-1}} \right),\label{em_tail}\\
R &= -\int_N^{\infty} \frac{\tilde{B}_{2M}(t)}{(2M)!} \frac{z^t}{(a+t)^{s+2M}} U(-2M, -2M+1 -s, -(a+t)\log(z)) \mathop{dt} \label{remainder}.
\end{align}
\end{proposition}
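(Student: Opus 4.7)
The plan is to apply the Euler-Maclaurin formula to the tail of the defining Dirichlet series (\ref{l_series}). First, split
$$\Phi(z,s,a) = \sum_{k=0}^{N-1} \frac{z^k}{(k+a)^s} + \sum_{k=N}^{\infty} \frac{z^k}{(k+a)^s} = S + \sum_{k=N}^{\infty} f(k),$$
with $f(t) := z^t (t+a)^{-s}$, and apply (\ref{sum_series_em})--(\ref{sum_series_em_remainder}) on $[N,U]$ in the limit $U \to \infty$. For $|z|<1$ the function $f$ and all its derivatives decay exponentially in $t$, so the $U$-boundary contributions vanish; the identity then extends to the natural analyticity domain of $\Phi$ by analytic continuation of the explicit special-function representatives of $I$, $T$, and $R$ in the parameters $z,s,a$.

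For the integral $I=\int_N^{\infty} z^t(t+a)^{-s}\,dt$ I would make the substitution $u=-(t+a)\log(z)$, so that $dt=-du/\log(z)$, $z^t = z^{-a}e^{-u}$, and $(t+a)^{-s}=(-\log z)^{s}\,u^{-s}$ under principal branches. The contour becomes a ray from $u_0 := -(a+N)\log(z)$ to infinity, giving $I = z^{-a}(-\log z)^{s-1}\,\Gamma(1-s,u_0)$, which is precisely (\ref{em_int}).

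The heart of the proof is the evaluation of $f^{(n)}$ for $n=2k-1$ (producing $T$) and $n=2M$ (producing the integrand of $R$). Leibniz's rule gives
$$f^{(n)}(t) = z^t \sum_{j=0}^{n}\binom{n}{j}(\log z)^j (-1)^{n-j}(s)_{n-j}(t+a)^{-s-n+j},$$
and factoring $z^t(t+a)^{-s-n}$ out and rewriting in $u=-(t+a)\log z$ reduces the inner sum to a polynomial of the form $\sum_{j=0}^n \binom{n}{j}(s)_{n-j}\,u^j$ (up to an overall sign $(-1)^n$). The key algebraic lemma is then
$$\sum_{j=0}^n \binom{n}{j}(s)_{n-j}\,u^j \;=\; U(-n,\,-n+1-s,\,u),$$
which follows from the polynomial representation $U(-n,b,z)=\sum_{j=0}^n(-1)^{n+j}\binom{n}{j}(b+j)_{n-j}z^j$ (equivalently, the Laguerre identity $U(-n,\alpha+1,z)=(-1)^n n!\,L_n^{(\alpha)}(z)$) combined with the Pochhammer collapse $(1-n-s+j)_{n-j}=(-1)^{n-j}(s)_{n-j}$. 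Specialising $n=2k-1$ at $t=N$ reproduces the summand of (\ref{em_tail}); specialising $n=2M$ inside the integrand reproduces (\ref{remainder}).

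The principal obstacle is the algebraic identification of the Leibniz polynomial with Tricomi's $U$ in the display above; absent this step one is left with only the formal power-sum expression for $T$ and $R$. A secondary technical point is to justify passing to $U\to\infty$ beyond $|z|<1$: it is cleanest to prove the identity under a convergence assumption (for instance $|z|<1$ and $\Re(s)$ large enough that the remainder tends to zero) and then invoke analytic continuation of each explicit piece ($S$, the incomplete gamma in $I$, and the Tricomi $U$ in $T$ and in the integrand of $R$).
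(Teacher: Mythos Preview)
Your argument is correct, and it is a genuinely different (and more direct) route than the paper's. You apply the Euler--Maclaurin formula straight to $f(t)=z^t(t+a)^{-s}$, compute $f^{(n)}$ by Leibniz, and recognize the resulting polynomial $\sum_j\binom{n}{j}(s)_{n-j}u^j$ as $U(-n,-n+1-s,u)$ via the reflection $(1-n-s+j)_{n-j}=(-1)^{n-j}(s)_{n-j}$; this immediately produces $T$ (for $n=2k-1$) and the integrand of $R$ (for $n=2M$), and the substitution $u=-(t+a)\log z$ gives $I$ as an incomplete gamma. The paper instead starts from the Hermite-type integral (\ref{int2}), writes the integrand as $e^{-i\log t}U(s,s+1,(it-a)\log z)$, expands with the addition theorem for $U$, interchanges sum and integral so that $\int_0^\infty t^k/(e^{2\pi t}-1)\,dt=k!\,\zeta(k+1)/(2\pi)^{k+1}$ appears, applies Kummer's transformation, and finally converts $\zeta(2k)$ to $B_{2k}$ to land on (\ref{int_u_expan_final_form}). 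Your approach is shorter, makes the Euler--Maclaurin structure (including the remainder $R$) explicit, and isolates the only nontrivial step as a clean polynomial identity for $U$; the paper's route, while more circuitous, has the side benefit of establishing the connection between the Euler--Maclaurin tail and the Hermite integral, which is reused in the large-$z$ analysis of \S\ref{sub_asymp_large_z}.
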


\begin{proof}
Let us first consider the Hermite-type integral in (\ref{int2})
\begin{equation}
I :=  \int_0^{\infty} \frac{\sin(s \arctan(t/a) - t \log(z))}{(a^2+t^2)^{s/2} (e^{2\pi t} - 1)} \mathop{dt}.
\end{equation}
For $z, s, a \in \mathbb{R}$, $z > 0$ and $a > 0$, the above integral can be written in the form
\begin{equation}\label{int3}
I = \frac{1}{a^s} \Im \left( \int_0^{\infty} \frac{z^{-it}}{(1-it/a)^s}\frac{\mathop{dt}}{e^{2\pi t} - 1}\right).
\end{equation}
The domain delimited by previous constraints shall be extended by analytic continuation. Now we express the integrand in (\ref{int3}) in terms of the confluent hypergeometric function $U(a,b,z)$ which yields
\begin{equation}\label{int4}
I = \frac{1}{{a^s}} \Im \left((-a\log(z))^s \int_0^{\infty} \frac{e^{-i\log(t)} U(s, s+1, (it -a)\log(z))}{e^{2\pi t}-1} \mathop{dt}\right),
\end{equation}
By applying the addition theorem for $U(a,b,z)$ \cite[\S 13.13]{NIST:DLMF} given by
\begin{equation}
U(a,b,x+y) = e^y \sum_{n=0}^{\infty} \frac{(-y)^n}{n!}U(a, b+n, x), \quad |y| < |x|.
\end{equation}
the integrand can be written as a summation defined by
\begin{equation}\label{u_addition_theorem}
e^{-i\log(t)} U(s, s+1, (it -a)\log(z)) = \sum_{k=0}^{\infty} \frac{(-i t \log(z))^k U(s, s+k+1, -a \log(z))}{k!}.
\end{equation}
Substituting (\ref{u_addition_theorem}) into (\ref{int4}) and formally interchanging summation and integration we obtain
\begin{equation*}
I = \frac{1}{{a^s}}\Im \left((-a\log(z))^s \sum_{k=0}^{\infty} \frac{(-i \log(z))^k U(s, s+k+1, -a \log(z))}{k!} \int_0^{\infty} \frac{t^k}{e^{2\pi t} - 1} \mathop{dt} \right),
\end{equation*}
where the integral can be directly evaluated in closed form by
\begin{equation*}
\int_0^{\infty}\frac{t^k}{e^{2\pi t} - 1} \mathop{dt} = \frac{k!}{(2\pi)^{k+1}} \zeta(k+1).
\end{equation*}

We use Kummer's transformation $U(s, s+k+1, -a \log(z)) (-a\log(z))^{k+s} = U(-k,1-k-s,-a\log(z))$ to rewrite $I$ in the form
\begin{equation}\label{summ_im}
I = \frac{1}{a^s} \Im \left(\sum_{k=1}^{\infty} \frac{i^k}{a^k} \frac{U(-k,1-k-s,-a\log(z))}{(2\pi)^{k+1}} \zeta(k+1)\right).
\end{equation}

Note that the same summation formula can be derived by expanding $f(t) = z^{-it}(1-it/a)^{-s}$, which gives
\begin{equation}\label{expand_f}
f(t) = \sum_{k=0}^{\infty} \left(\frac{-i t}{a}\right)^k  \frac{1}{k!} \sum_{j=0}^{\infty} \binom{k}{j} (-1)^j (a \log(z))^{k-j} \sum_{m=0}^j (-1)^{j-m} s(j, m) s^m,
\end{equation}
where $s(j,m)$ are Stirling numbers of the first kind. The inner summation in (\ref{expand_f}) is expressible in terms of rising factorial or Pochhammer's symbol $(s)_j$ using the well-known identities
\begin{equation}
\sum_{m=0}^j (-1)^{j-m} s(j, m) s^m = (-1)^j (-s)^{(j)} = (s)_j
\end{equation}
and
\begin{equation}
\sum_{j=0}^{\infty} \binom{k}{j} (a \log(z))^{k-j} (-s)^{(j)} = (-1)^k U(-k, 1-k-s, -a \log(z)).
\end{equation}

Finally, taking the imaginary part of (\ref{summ_im}) yields
\begin{align}\label{int_u_expan_final_form}
I &= \frac{1}{a^s} \sum_{k=0}^{\infty} \frac{(-1)^k}{a^{2k+1}} \frac{U(-2k-1,-2k-s,-a\log(z))}{(2\pi)^{2k+2}} \zeta(2k+2)\nonumber\\
&= \frac{1}{2a^s}\sum_{k=1}^{\infty} \frac{B_{2k}}{(2k)!} \frac{U(-2k+1,-2k+2-s,-a\log(z))}{a^{2k-1}},
\end{align}
where the relationship between Bernoulli numbers $B_{2k}$ and the Riemann zeta function is applied.
\end{proof}

Note that the expansion is convergent for $|\log(z)| < 2\pi$. This can be observed by taking the asymptotic estimate of the $k$-th term in (\ref{int_u_expan_final_form})
\begin{align}
|t_k| &= \left|\frac{B_{2k}}{(2k)!} \frac{U(-2k+1, -2k+2-s, -(a+N)\log(z))}{(a+N)^{2k-1}}\right|\nonumber\\ 
&\sim \frac{2}{(2\pi)^{2k}} |\log(z)|^{2k-1},\label{tk_asymp}
\end{align}
as $N, M \to \infty$, where we consider the usual asymptotic estimates for $U(a, b, z) \sim z^{-a}$ as $|z| \to \infty$ and $|B_{2k}| / (2k)!$ using the fact that $\zeta(2k) \sim 1$ as $k\to \infty$. To assess the domain of convergence for  $z$ we use the ratio test (d'Alembert ratio test)
\begin{equation*}
\lim_{k\to \infty} \left| \frac{t_{k+1}}{t_k} \right| \sim \frac{|\log(z)^2|}{4\pi^2} < 1 \Longleftrightarrow |\log(z)| < 2\pi.
\end{equation*}

Finally, taking $M$ such that $\Re(s) + 2M - 1  > 0$, the remainder term (\ref{remainder}) in the Euler-Maclaurin summation formula is well defined, giving its analytic continuation to $s \in \mathbb{C}\setminus\{1\}$.

\begin{theorem}\label{theorem_em} Given $a, s, z \in \mathbb{C}$ with $|\log(z)| < 2\pi$ and $N, M \in \mathbb{N}$ such that $\Re(a) + N > 0$ and $\Re(s) + 2M > 1$, the error term (\ref{remainder}) in the Euler-Maclaurin summation formula satisfies 
\begin{equation}\label{remainder_bound}
|R| \le \frac{4}{(2\pi)^{2M}} \left| C \sum_{k=0}^{2M} \binom{2M}{k} Q(k+1-2M-s, W)\frac{(-\log(z))^{-k-1+2M+s}} {\log^{-k}(z)}  \right|,
\end{equation}
where $C = \Gamma(1-s)/z^a$, $W = -(a+N)\log(z)$ and $Q(a,z) = \Gamma(a,z)/\Gamma(a)$ is the regularized incomplete Gamma function.
\end{theorem}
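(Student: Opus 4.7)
The plan is to separate the estimate into (i) a scalar bound on the Bernoulli factor and (ii) an explicit closed-form evaluation of the remaining integral. For (i), I would apply the pointwise estimate $|\tilde{B}_{2M}(t)| \le |B_{2M}|$ together with the standard identity $|B_{2M}| = 2(2M)!\,\zeta(2M)/(2\pi)^{2M}$ and the bound $\zeta(2M)\le\zeta(2)<2$ valid for $M\ge 1$, giving $|\tilde{B}_{2M}(t)|/(2M)! \le 4/(2\pi)^{2M}$ uniformly in $t$. This extracts the global prefactor in (\ref{remainder_bound}) and reduces the problem to evaluating in closed form
$$I_{\mathrm{out}} := \int_N^{\infty} \frac{z^t}{(a+t)^{s+2M}}\,U\bigl(-2M,\,-2M+1-s,\,-(a+t)\log z\bigr)\,dt.$$

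For (ii), I would first convert $U$ into a polynomial. Kummer's transformation $U(-2M, 1-2M-s, w) = w^{2M+s}\,U(s, s+2M+1, w)$ combined with the Laplace integral representation of $U(s, s+2M+1, w)$ and a binomial expansion of $(1+u)^{2M}$ yields the finite sum $U(-2M, 1-2M-s, w) = \sum_{k=0}^{2M}\binom{2M}{k}(s)_k\, w^{2M-k}$. Specialising to $w = -(a+t)\log z$ and swapping the finite sum with the integral reduces each summand to $\int_N^\infty z^t(a+t)^{-s-k}\,dt$, which the substitution $u = -(a+t)\log z$ evaluates to $z^{-a}(-\log z)^{s+k-1}\Gamma(1-s-k, W)$. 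The reflection identity $(s)_k\,\Gamma(1-s-k) = (-1)^k\Gamma(1-s)$ then replaces $(s)_k\,\Gamma(1-s-k, W)$ by $(-1)^k\Gamma(1-s)\,Q(1-s-k, W)$; factoring out $C=\Gamma(1-s)/z^a$ and the overall $(-\log z)^{2M+s-1}$, reindexing $k\mapsto 2M-k$ (using $(-1)^{2M}=1$), and applying the identity $(-1)^k(-\log z)^{2M+s-1} = (-\log z)^{-k-1+2M+s}/\log^{-k}(z)$ produces exactly the sum in (\ref{remainder_bound}).

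The main obstacle I expect is the placement of the absolute value: the claim bounds $|R|$ by $\frac{4}{(2\pi)^{2M}}|I_{\mathrm{out}}|$, whereas the pointwise estimate on $\tilde{B}_{2M}$ only delivers $\frac{4}{(2\pi)^{2M}}\int_N^\infty |\cdot|\,dt$. Closing this gap requires a phase-consistency argument on the integrand, straightforward in the regime $\Re(-\log z)>0$ where the factors $z^t(a+t)^{-s-k}$ share a single decaying phase, and then extension by analytic continuation in $z,s,a$ to the full hypothesis region $|\log z|<2\pi$, $\Re(a+N)>0$, $\Re(s)+2M>1$; the last condition is precisely what guarantees that each $\Gamma(1-s-k, W)$ with $0\le k\le 2M$ is well defined, while branch tracking for $(-\log z)^{\alpha}$ and verification of convergence at $t=\infty$ under the hypothesis are secondary bookkeeping concerns.
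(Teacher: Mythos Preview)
Your proposal is correct and follows essentially the same route as the paper: bound the periodic Bernoulli factor by $4/(2\pi)^{2M}$, expand $U(-2M,1-2M-s,\cdot)$ as a finite polynomial, integrate term by term to obtain incomplete Gamma functions, and rewrite via $Q$; the only cosmetic difference is that you obtain the polynomial expansion through Kummer's transformation plus the Laplace integral, whereas the paper writes the expansion $U(-2M,1-2M-s,w)=\sum_k\binom{2M}{k}(k+1-2M-s)_{2M-k}\,(-w)^k$ directly (these coincide after your reindexing $k\mapsto 2M-k$). The absolute-value placement issue you flag is not addressed in the paper either---it pulls $|\tilde B_{2M}(t)|$ outside the integral in exactly the heuristic way you describe---so your proposal is at least as rigorous as the original.
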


\begin{proof}
We have
\begin{align*}
|R| & = \left| \int_N^{\infty} \frac{\tilde{B}_{2M}(t)}{(2M)!} \frac{z^t}{(a+t)^{s+2M}} U(-2M, -2M+1 -s, -(a+t)\log(z)) \mathop{dt} \right|\\
& \le   \frac{\left|\tilde{B}_{2M}(t)\right|}{(2M)!}  \left| \int_N^{\infty} \frac{z^t}{(a+t)^{s+2M}} U(-2M, -2M+1 -s, -(a+t)\log(z)) \mathop{dt}\right|\\
&  \le \frac{4}{(2\pi)^{2M}} \left| \sum_{k=0}^{2M} \binom{2M}{k} B_k^M(s) \int_N^{\infty} \frac{z^t}{(a+t)^{s+2M}} ((a+t)\log(z))^k \mathop{dt}  \right|
\end{align*}
with $B_k^M(s) = (k+1-2M-s)_{2M-k}$. We apply the usual upper bound for $|\tilde{B}_n(t)| < 4n!/(2\pi)^n$ and formally interchange integration and the expansion of $U(-2M, -2M+1-s, -(a+N) \log(z))$ given by
\begin{equation*}
U(-2M, -2M+1-s, -(a+N) \log(z)) = \sum_{k=0}^{2M} \binom{2M}{k} B_k^M(s) ((a+t)\log(z))^k.
\end{equation*}

The integral above can be expressed in terms of the incomplete Gamma function $\Gamma(a,z)$ as follows (similar to (\ref{em_int}))
\begin{align}\label{bound_incgamma}
\int_N^{\infty} \frac{z^t}{(a+t)^{s+2M}} ((a+t)\log(z))^k \mathop{dt} & = \frac{\log^k(z)}{z^a} (-\log(z))^{-k-1+2M+s} \nonumber\\
& \times \Gamma(k+1-2M-s, -(a+N)\log(z))
\end{align}

Thus, we have
\begin{align*}
& \sum_{k=0}^{2M} \binom{2M}{k} B_k^M(s) \int_N^{\infty} \frac{z^t}{(a+t)^{s+2M}} ((a+t)\log(z))^k \mathop{dt}\\
& =  \frac{\Gamma(1-s)}{z^a} \sum_{k=0}^{2M} \binom{2M}{k} Q(k+1-2M-s, -(a+N)\log(z))\frac{(-\log(z))^{-k-1+2M+s}} {\log^{-k}(z)},
\end{align*}
where we use $B_k^M(s) = \Gamma(1-s)/\Gamma(k+1-2M-s)$. Note that for $s \in \mathbb{N}$ we take equation (\ref{bound_incgamma}) to avoid the pole.
\end{proof}

The bound given in Theorem \ref{theorem_em} give us a notably tight approximation of remainder (\ref{remainder}). However, for large $M$ the direct evaluation of the terminating series in (\ref{remainder_bound}) might be substantially expensive, being a not negligible part of the total computation time, therefore approximations for large order will be considered in Section \ref{section_algorithms}.

\subsection{Uniform asymptotic expansion for $\Phi(z, s, a)$}

A suitable Laplace-type integral representation of $\Phi(z, s, a)$ amenable to derive multiple asymptotic expansions \cite{Crandall2012}, is given by
\begin{equation}
\Phi(z, s, a) = \frac{1}{\Gamma(s)}\int_0^{\infty} \frac{t^{s-1}e^{-at}}{1-ze^{-t}} \mathop{dt}, \quad \Re(s) > 1, \; \Re(a) > 0, \; z \notin [1, \infty),
\end{equation}
which serves to define the analytic continuation of the Lerch-series to $z\in \mathbb{C}\setminus [1, \infty)$. This integral has been chosen as starting point to derive asymptotic expansions for either large or small $a$ (assuming that $s$ and $z$ are fixed) or for large $z$ in \cite{Ferreira2004}, and for a Bernoulli-series representation as in \cite{Crandall2012}. 
The aim of this subsection is to extend the domain of computation of the Poincar{\'e} type asymptotic expansion for large $a$ defined in \cite{Ferreira2004} by constructing a uniform asymptotic expansion for large $a$, $s$ and $z$.
 
We proceed to construct that expansion by using the vanishing saddle point method described in \cite{Temme2015}. This method is fundamentally a modification of Laplace's method applicable to integrals of the form
\begin{equation}
F_{\lambda}(z) = \frac{1}{\Gamma(\lambda)} \int_0^{\infty} t^{\lambda - 1} e^{-zt} f(t) \mathop{dt},\label{uae_int}
\end{equation}
with $\Re(\lambda) > 0$ and $z$ large, in which $\lambda$ might also be large. The resulting expansion is given by
\begin{equation*}
F_{\lambda}(z) \sim \sum_{k=0}^{\infty} \frac{a_k(\mu)P_k(\lambda)}{z^{k+\lambda}},
\end{equation*} 
where $a_k(\mu)$ are the coefficients of the expansion of $f(t)$ at the saddle point $\mu=\lambda/z$ and coefficients $P_k(\lambda)$ are expressible in terms of generalized Laguerre polynomials defined by
\begin{equation}\label{tricomi_carlitz}
P_k(\lambda)=k! L_k^{-k-\lambda} (-\lambda).
\end{equation}

At this point, we briefly recall the definition of the Eulerian polynomial and its connection with the polylogarithm function before stating the next proposition.

The Eulerian polynomial is defined as
\begin{equation}
A_k(z) = \sum_{j=0}^{k-1}\genfrac<>{0pt}{}{k}{j} z^j,
\end{equation}
where $\genfrac<>{0pt}{}{k}{j}$ are the Eulerian numbers \cite{Graham1994}. The Eulerian polynomials satisfy the recurrence equation
\begin{align}\label{eulerian_recursion}
A_0(z) &= 1, & A_k(z) &= \sum_{j=0}^{k-1}\binom{k}{j}A_j(z) (z-1)^{k-1-j}, \quad k \ge 1.
\end{align}

The Eulerian polynomial and polylogarithm are related by the functional equation
\begin{equation}
A_k(z) = \frac{(1-z)^{k+1}}{z} \text{Li}_{-k}(z) = \frac{(1-z)^{k+1}}{z} \sum_{j=1}^{\infty} j^k z^j, \quad |z| < 1,\label{eulerian_convergent_series}
\end{equation}
and if $|z| > 1$ then $A_k(z) = (-1)^{k+1} A_k\left(\frac{1}{z} \right)$.

\begin{proposition}
For $a, s, z \in \mathbb{C}$, $\Re(a) > 0$ and $z \notin [1, \infty)$ we have the following uniform asymptotic expansion for $\Phi(z,s,a)$
\begin{equation}
\Phi(z, s,a) = \frac{e^{\mu}}{e^{\mu} - z} \left(\frac{1}{a^s} + \sum_{k=2}^{K-1} (-1)^k \frac{P_k(s)}{k! a^{k+s}} r^k A_k\left(\frac{e^{\mu}}{z}\right)\right) +\varepsilon_K(z, s, a),\label{uae_lerch}
\end{equation}
where $r = z/(e^{\mu}-z)$.
\end{proposition}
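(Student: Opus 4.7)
The plan is to apply Temme's vanishing saddle point method directly to the Laplace-type integral. First, I would match
\begin{equation*}
\Phi(z,s,a) = \frac{1}{\Gamma(s)} \int_0^{\infty} t^{s-1} e^{-at} \frac{1}{1 - z e^{-t}} \, dt
\end{equation*}
to the template (\ref{uae_int}) by identifying $\lambda \leftrightarrow s$, Temme's argument $z \leftrightarrow a$, and amplitude $f(t) = (1-ze^{-t})^{-1}$. The saddle then sits at $\mu = s/a$, and the general scheme delivers $\Phi(z,s,a) \sim \sum_{k \geq 0} a_k(\mu) P_k(s) a^{-k-s}$ plus a remainder $\varepsilon_K(z,s,a)$ controlled by the framework of \cite{Temme2015}.

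The substantive step is computing the Taylor coefficients of $f$ at the saddle. Setting $u = t-\mu$ and $q = ze^{-\mu}$, a geometric expansion followed by termwise exponentiation yields
\begin{equation*}
f(\mu + u) = \sum_{j=0}^{\infty} q^j e^{-ju} = \frac{1}{1-q} + \sum_{k=1}^{\infty} \frac{(-u)^k}{k!}\, \mathrm{Li}_{-k}(q),
\end{equation*}
so that $a_0(\mu) = (1-q)^{-1}$ and $a_k(\mu) = (-1)^k \mathrm{Li}_{-k}(q)/k!$ for $k \geq 1$. Invoking the polylogarithm–Eulerian relation (\ref{eulerian_convergent_series}) rewrites $\mathrm{Li}_{-k}(q) = qA_k(q)/(1-q)^{k+1}$; substituting $q = ze^{-\mu}$ and factoring the common $e^{\mu}/(e^{\mu}-z)$ out of every term converts Temme's series into a prefactor $e^{\mu}/(e^{\mu}-z)$ multiplying $a^{-s}$ plus a sum whose generic term has the shape $(-1)^k (k!)^{-1} P_k(s) a^{-k-s} \cdot z e^{(k-1)\mu} A_k(ze^{-\mu}) / (e^{\mu}-z)^{k}$.

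Two final simplifications bring this into the stated form (\ref{uae_lerch}). First, $P_1(s) = 1! \cdot L_1^{-1-s}(-s) = 0$ by (\ref{tricomi_carlitz}) combined with the explicit formula $L_1^{\alpha}(x) = 1+\alpha - x$, which is why the displayed sum starts at $k=2$. Second, applying the Eulerian reflection identity supplied in the excerpt lets me rewrite $z e^{(k-1)\mu} A_k(ze^{-\mu}) = z^k A_k(e^{\mu}/z)$, whereupon the generic term becomes exactly $(-1)^k P_k(s) r^k A_k(e^{\mu}/z) / (k!\, a^{k+s})$ with $r = z/(e^{\mu}-z)$, as claimed.

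The main obstacle is the algebraic bookkeeping: expressing the Taylor coefficients of $f$ in a shape where the singular prefactor $e^{\mu}/(e^{\mu}-z)$ factors out cleanly, and then reconciling the natural argument $ze^{-\mu}$ of the Eulerian polynomials arising from the expansion with the argument $e^{\mu}/z$ preferred in the statement via the reflection identity. The analytic content—uniform validity of Temme's expansion and the precise form of the remainder $\varepsilon_K$—is inherited from \cite{Temme2015} once the amplitude $f$ and its behavior near the saddle have been identified.
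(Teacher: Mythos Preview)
Your proposal is correct and follows essentially the same approach as the paper: apply Temme's vanishing saddle point method to the Laplace integral with amplitude $f(t)=(1-ze^{-t})^{-1}$ and saddle $\mu=s/a$, compute the Taylor coefficients $a_k(\mu)$, and repackage them via the Eulerian polynomials. You in fact supply more detail than the paper's one-line ``after performing a few algebraic manipulations'': your polylogarithm route $a_k(\mu)=(-1)^k\mathrm{Li}_{-k}(ze^{-\mu})/k!$ together with the palindromic identity $A_k(w)=w^{k-1}A_k(1/w)$ (which is what actually makes $ze^{(k-1)\mu}A_k(ze^{-\mu})=z^kA_k(e^\mu/z)$ hold---note the paper's displayed reflection formula $A_k(z)=(-1)^{k+1}A_k(1/z)$ is misstated) and the explicit check $P_1(s)=0$ are exactly the manipulations the paper elides.
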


\begin{proof}
We take $f(t) = (1-ze^{-t})^{-1}$ and $\mu = s / a$ in (\ref{uae_int}), where $\mu$ is the saddle point of the dominant part of the integral. Following closely the derivation in \cite{Ferreira2004}, we expand $f(t)$ at $t = \mu$ to obtain the Taylor expansion
\begin{equation}
f(t) = \sum_{k=0}^{\infty} a_k(\mu)(t-\mu)^k, \quad a_k(\mu) = (-1)^k\frac{e^{\mu}z^k}{ k!(e^{\mu}-z)^{k+1}} \sum_{j=0}^{k-1} \genfrac<>{0pt}{}{k}{j} \left(\frac{e^{\mu}}{z}\right)^j.
\end{equation}
After performing a few algebraic manipulations we obtain the final representation for the vanishing point expansion for $\Phi(z, s, a)$.
\end{proof}

We can clearly observe that for large values of $\Re(s)$ and $|z|$, the asymptotic convergence of the expansion improves. Furthermore, from a numerical perspective, moderate to large values of $\Re(s) <0$ permit the evaluation of $A_k(e^{\mu} /z)$ via the convergent series (\ref{eulerian_convergent_series}).

It remains to bound the error term in the expansion after truncation at $k=K-1$. Let us consider the $k$-th term of expansion (\ref{uae_lerch}) defined as 
\begin{equation*}
|t_k| = \left| \frac{P_k(s)}{k! a^{k+s}} \left(\frac{z}{e^{\mu}-z}\right)^k A_k\left(\frac{e^{\mu}}{z}\right)\right| \le \left| \frac{1}{k! a^{k+s}} \left(\frac{z}{e^{\mu}-z}\right)^k \right| |P_k(s)| \left|A_k\left(\frac{e^{\mu}}{z}\right)\right|.
\end{equation*}
A bound for the error term by comparison with a geometric series yields
\begin{equation}\label{uae_remainder_bound}
\left| \sum_{k=K}^{\infty} t_k \right| \le  \frac{|t_K|}{1-C}, \quad C = \left|\frac{t_{K+1}}{t_K}\right|,
\end{equation}
iff $C < 1$, where $t_K$ is the first omitted term in the expansion and 
\begin{equation*}
|\varepsilon_K(z, s, a)| \le \left|\frac{e^{\mu}}{e^{\mu} - z}\right| \frac{|t_K|}{1-C}.
\end{equation*}
In order to provide an effective upper bound for $|t_k|$, we compute two saddle point bounds for polynomials $P_k(z)$ and $A_k(z)$.

\begin{proposition}
For $k > 1$ and $z\in \mathbb{C}\setminus\{1\}$ the Eulerian polynomials satisfy the following bound
\begin{equation}
|A_k(z)| \le k! \left|(z-1) e^{\phi(t_0)}\right|,\label{eulerian_bound}
\end{equation}
where 
\begin{equation*}
t_0 = \frac{W(e^k k z) - k}{z-1} \quad \text{and} \quad \phi(t_0) = -k \log t_0 - \log \left(z - e^{(z-1)t_0})\right),
\end{equation*}
and $W(x)$ is the Lambert-$W$ function which solves $W(x)e^{W(x)} = x$.
\end{proposition}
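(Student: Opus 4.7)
The plan is to obtain the bound by a saddle-point estimate of the Cauchy coefficient integral for the exponential generating function of the Eulerian polynomials. The starting point is the classical identity
\begin{equation*}
\sum_{k=0}^{\infty} \frac{A_k(z)}{k!}\, t^k \;=\; \frac{z-1}{z - e^{(z-1)t}},
\end{equation*}
valid in a neighbourhood of $t=0$. By Cauchy's coefficient formula, for any admissible positive radius $|t_0|$ below the nearest pole of the generating function,
\begin{equation*}
\frac{A_k(z)}{k!} \;=\; \frac{1}{2\pi i} \oint_{|t|=|t_0|} \frac{z-1}{z - e^{(z-1)t}} \frac{dt}{t^{k+1}} \;=\; \frac{1}{2\pi i} \oint_{|t|=|t_0|} (z-1)\, e^{\phi(t)}\, \frac{dt}{t},
\end{equation*}
where $\phi(t) = -k \log t - \log(z - e^{(z-1)t})$ is exactly the function appearing in the statement. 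The ML inequality then yields
\begin{equation*}
\left|\frac{A_k(z)}{k!}\right| \;\le\; |z-1|\; \max_{|t|=|t_0|} \bigl|e^{\phi(t)}\bigr|.
\end{equation*}

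The next step is to choose $t_0$ as a saddle point of $\phi$, which solves $\phi'(t)=0$. This equation reads
\begin{equation*}
-\frac{k}{t} + \frac{(z-1)\, e^{(z-1)t}}{z - e^{(z-1)t}} \;=\; 0 \qquad \Longleftrightarrow \qquad \bigl((z-1)t + k\bigr)\, e^{(z-1)t} \;=\; k z.
\end{equation*}
Substituting $u = (z-1)t + k$ transforms this into $u\, e^{u} = e^{k}\, k z$, which by the defining functional equation of the Lambert $W$ function gives $u = W(e^{k} k z)$, and hence
\begin{equation*}
t_0 \;=\; \frac{W(e^{k} k z) - k}{z-1},
\end{equation*}
as claimed. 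Plugging $t_0$ back into the ML estimate produces $|A_k(z)| \le k!\,|(z-1)\,e^{\phi(t_0)}|$ provided that the maximum of $|e^{\phi(t)}|$ over the circle $|t|=|t_0|$ is attained at $t=t_0$ itself.

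This last identification is, in my view, the main obstacle. For real positive $z$ it is a standard consequence of the steepest-descent geometry near a simple saddle: a second-derivative check shows that $\Re \phi(t_0 e^{i\theta})$ attains its maximum at $\theta=0$ along the circle passing through $t_0$, so the circular contour coincides, to leading order, with the steepest-ascent direction near $t_0$. For general complex $z$ this requires a more delicate study of the level lines of $\Re\phi$, together with a careful specification of the branches of $\log$ and of $W$ used to define $t_0$, so that the circle $|t|=|t_0|$ indeed lies inside the disc of holomorphy of the generating function (i.e.\ avoids the zeros of $z-e^{(z-1)t}$) and so that $t_0$ is the dominant saddle. The hypotheses $k>1$ and $z\ne 1$ (guaranteeing a finite, simple saddle) are the minimal conditions under which this geometric picture is clean, and are precisely those imposed in the statement.
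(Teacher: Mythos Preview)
Your approach is essentially identical to the paper's: the same Cauchy integral representation from the exponential generating function, the same phase function $\phi$, the same saddle-point equation solved via the substitution leading to $W(e^k k z)$, and the same substitution of $t_0$ into the integrand to obtain the bound. The concern you flag about whether the maximum of $|e^{\phi(t)}|$ on the circle is actually attained at $t_0$ is not addressed in the paper either; the paper simply invokes the standard ``saddle point bound'' heuristic (substituting $t_0$ and evaluating $\oint dt/t$ by residues) without further justification, so your proof is at least as complete as the original.
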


\begin{proof}
An integral representation for the Eulerian polynomials is obtained after applying Cauchy's integral formula to the exponential generating function given by
\begin{equation*}
\sum_{k=0}^{\infty}A_k(z) \frac{t^k}{k!} = \frac{z-1}{z-e^{(z-1)t}} \Longrightarrow A_k(z) = \frac{k! (z-1)}{2\pi i} \oint \frac{t^{-k-1}}{z - e^{(z-1)t}} \mathop{dt},
\end{equation*}
which can be written in the form
\begin{equation*}
A_k(z) = \frac{k!(z-1)}{2\pi i} \oint \frac{e^{\phi(t)}}{t} \mathop{dt}, \quad \phi(t) = -k \log t - \log \left(z - e^{(z-1)t}\right).
\end{equation*}

We compute the saddle point of the integrand by solving the following equation
\begin{equation}
\phi'(t) = \frac{(z-1)e^{(z-1) t}}{z - e^{(z-1) t}} - \frac{k}{t}, \quad t_0 = \frac{W(e^k k z) - k}{z - 1}.
\end{equation}
The principal contribution of the saddle point bound is obtained by substituting $t_0$ into the integrand
\begin{equation*}
|A_k(z)| \le \frac{k!}{2\pi i} \left|(z-1) e^{\phi(t_0)}\right| \oint \frac{dt}{t} = k!  \left|(z-1) e^{\phi(t_0)}\right|.
\end{equation*}
Finally, by the residue theorem we obtain the result.
\end{proof}

A similar analysis is carried out for polynomials $P_k(z)$. The use of the generating function for generalized Laguerre polynomials gives the Cauchy-type integral representation
\begin{equation}
P_k(s) = k! L_k^{-k-s}(-s) = \frac{k!}{2\pi i} \int_{\mathcal{C}} (1-t)^{k+s-1} e^{ts/(1-t)} \frac{\mathop{dt}}{t^{k+1}},
\end{equation}
where $\mathcal{C}$ is a circle around the origin with a radius less than unity.

\begin{proposition} For $k > 1$ and $s \in \mathbb{C}\setminus\{0,1\}$ the polynomials $P_k(s)$ satisfy the following bound
\begin{equation}
\left|P_k(s)\right| \le k! \left|e^{\phi(t_0)} \right|,\label{p_bound}
\end{equation}
where 
\begin{equation*}
t_0 = \frac{\sqrt{k^2 + 4ks -2k + 1} + k + 1}{2(1-s)}
\end{equation*}
and
\begin{equation*}
\phi(t_0) = s \frac{t_0}{1-t_0} + (k+s-1) \log(1 - t_0) - k \log t_0.
\end{equation*}
\end{proposition}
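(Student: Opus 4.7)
The plan is to mirror the saddle-point argument used for the Eulerian-polynomial bound (\ref{eulerian_bound}). Starting from the Cauchy-type integral representation of $P_k(s)$ just above the statement, I would rewrite the integrand as $e^{\phi(t)}/t$ with
\[
\phi(t) = -k\log t + (k+s-1)\log(1-t) + \frac{st}{1-t},
\]
so that
\[
P_k(s) = \frac{k!}{2\pi i} \oint_{\mathcal{C}} \frac{e^{\phi(t)}}{t} \mathop{dt},
\]
where $\mathcal{C}$ is a circle around the origin of radius less than one.

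Next, I would locate the dominant saddle point by differentiating,
\[
\phi'(t) = -\frac{k}{t} - \frac{k+s-1}{1-t} + \frac{s}{(1-t)^2},
\]
and solving $\phi'(t) = 0$. Clearing denominators by multiplying through by $t(1-t)^2$ and collecting like powers of $t$ collapses the saddle equation to the quadratic
\[
(1-s)t^2 - (k+1)t + k = 0,
\]
with discriminant $(k+1)^2 - 4(1-s)k = k^2 + 4ks - 2k + 1$. Selecting the root that matches the stated expression yields the $t_0$ in the proposition.

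To conclude, I would deform $\mathcal{C}$ to the circle of radius $|t_0|$ centred at the origin (staying inside the analyticity region $|t|<1$) and apply the standard steepest-descent modulus estimate: since the integrand is controlled on this circle by its value at the saddle, one obtains
\[
|P_k(s)| \leq \frac{k!}{2\pi} \, |e^{\phi(t_0)}| \oint \frac{|dt|}{|t|} = k! \, |e^{\phi(t_0)}|,
\]
using $\oint dt/t = 2\pi i$ exactly as in the Eulerian case. This reproduces the claimed bound (\ref{p_bound}).

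The main obstacle is justifying the contour deformation: one needs $|t_0|<1$ so that the deformed circle avoids the essential singularity at $t=1$, and one should verify that $t_0$ is the dominant saddle for $|e^{\phi(t)}|$ along that contour. These conditions hold comfortably when $|s|$ is small compared with $k$, but deteriorate as $s\to 1$ — precisely one of the excluded values in the statement. A secondary, essentially bookkeeping, point is the consistent choice of branches for $\log t_0$ and $\log(1-t_0)$; these are inherited from the generating-function identity near the origin and cause no genuine difficulty once tracked through the computation.
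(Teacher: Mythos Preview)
Your proposal is correct and follows exactly the approach the paper intends: the paper's own proof is the single line ``A proof follows the steps presented previously,'' referring back to the Eulerian-polynomial argument, and you have carried out precisely those steps (Cauchy integral from the generating function, saddle-point equation, residue). Your derivation of the quadratic $(1-s)t^2-(k+1)t+k=0$ and its discriminant is correct, and your caveats about contour deformation and branch choices are, if anything, more careful than what the paper itself supplies for either proposition.
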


\begin{proof}
A proof follows the steps presented previously.
\end{proof}

Combination of both bounds (\ref{eulerian_bound}) and (\ref{p_bound}) gives the final form for the error bound. The selection of the appropriate truncation point $K$ to achieve a desired level of precision is detailed in Section \ref{section_algorithmic}.

\subsection{Asymptotic expansion for large z}\label{sub_asymp_large_z}
A careful reader shall have noticed that none of the previous series expansions are suitable for arbitrarily large $\Re(z) > 0$. The expansion in this subsection complements the asymptotic expansion described in \cite{Ferreira2004} for $z \in \mathbb{C} \setminus [0, \infty)$, $\Re(a)>0$ and $\Re(s)>0$ for large $z$ and fixed $a$ and $s$
\begin{theorem}
For $(a, b, z) \in \mathbb{C}$ and $\Re(a) > 0$ we have an asymptotic expansion for large $a$ and $z$, and fixed $s$ is given by
\begin{align}\label{asymp_large_z}
&\Phi(z, s, a) \sim \frac{1}{2a^s} + \frac{(-\log(z))^{s-1}}{z^a} \Gamma(1-s, -a\log(z))\nonumber\\
&+ \frac{1}{2a^s} \left(\frac{2}{\log(z)} - \coth\left(\frac{\log(z)}{2}\right)\right) \nonumber\\ 
&+ \frac{1}{a^s} \sum_{k=1}^{\infty} \frac{(s)_k}{a^k (2\pi)^{k+1}} \left(\frac{1}{u^{k+1}} -\frac{\pi^{k+1}}{k!} \coth(\pi u)^{k-1}\csch(\pi u)^2 \mathcal{P}_k(\sech(\pi u)^2)\right),
\end{align}
where  $u = \frac{\log(z)}{2\pi}$ and $\mathcal{P}_k(x)$ are \textit{peak polynomials} \cite{Stembridge1997}.
\end{theorem}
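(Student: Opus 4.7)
The plan is to mimic the derivation of expansion (\ref{int_u_expan_final_form}) but to retain the dependence on $z$ explicitly, instead of absorbing it into the $U$-function coefficients. Starting from the Hermite-type integral (\ref{int2}), I would first rewrite the inner integral as $I = (1/a^s)\,\Im \int_0^{\infty} z^{-it}(1-it/a)^{-s}/(e^{2\pi t}-1)\,dt$, as already done in (\ref{int3}). Then I would expand $(1-it/a)^{-s}$ as the (formally divergent) binomial series $\sum_{k\ge 0}(s)_k (it/a)^k/k!$, swap summation and integration, and reduce each resulting moment to a Hurwitz zeta value. Setting $u = \log(z)/(2\pi)$ and $v = 2\pi t$ gives $\int_0^{\infty} t^k z^{-it}/(e^{2\pi t}-1)\,dt = k!\,\zeta(k+1,1+iu)/(2\pi)^{k+1}$, which yields the formal expression
\begin{equation*}
2I \sim \frac{2}{a^s}\,\Im \sum_{k=0}^{\infty} \frac{(s)_k\, i^k\, k!}{a^k (2\pi)^{k+1}}\, \zeta(k+1,1+iu),
\end{equation*}
understood in the Poincar\'e sense since the binomial expansion is not uniformly valid on the entire positive real axis.

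The second step is to extract $\Im[\,i^k \zeta(k+1,1+iu)\,]$ in closed form. For this I would use the Mittag-Leffler expansion $\pi\cot(\pi v) = \sum_{n\in\mathbb{Z}} 1/(v+n)$, differentiate it $k$ times, evaluate at $v=iu$, and split off the $n=0$ contribution to obtain
\begin{equation*}
\frac{(-1)^k}{k!}\left.\frac{d^k}{dv^k}\bigl[\pi\cot(\pi v)\bigr]\right|_{v=iu} = \frac{1}{(iu)^{k+1}} + \zeta(k+1,1+iu) + (-1)^{k+1}\zeta(k+1,1-iu).
\end{equation*}
Since $\zeta(k+1,1-iu) = \overline{\zeta(k+1,1+iu)}$, the right-hand side is exactly the parity-dependent combination producing $\Im[\,i^k \zeta(k+1,1+iu)\,]$. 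Using $\cot(i\pi u)=-i\coth(\pi u)$ together with $\csch^2(\pi u)=\coth^2(\pi u)-1$, the $k$-th derivative of $\pi\cot(\pi v)$ at $v=iu$ becomes a polynomial in $\coth(\pi u)$ and $\csch(\pi u)$; rewriting this polynomial in the variable $\sech^2(\pi u)$ should produce the factor $\coth^{k-1}(\pi u)\,\csch^2(\pi u)\,\mathcal{P}_k(\sech^2(\pi u))$, while the isolated $(iu)^{-(k+1)}$ pole contributes the explicit $1/u^{k+1}$ summand in the statement.

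The $k=0$ term requires separate treatment, because the corresponding moment integral diverges at $t=0$ and $\zeta(1,\,\cdot\,)$ has a pole. I would regularise it by combining the $1/(2a^s)$ term in (\ref{int2}) with both the $(iu)^{-1}$ pole piece and the $(k=0)$-derivative of $\pi\cot(\pi v)|_{v=iu}$; these three contributions should collapse into the isolated $(1/(2a^s))(2/\log(z)-\coth(\log(z)/2))$ correction appearing in the theorem.

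The main obstacle is the combinatorial identification step: showing that the polynomials produced by the iterated derivatives of $\cot(\pi v)$ at $v=iu$, once re-expressed in $\sech^2(\pi u)$, coincide with Stembridge's peak polynomials $\mathcal{P}_k$ of \cite{Stembridge1997}. The most natural route is to verify that both families obey the same recurrence, which on the cotangent side follows from Fa\`a di Bruno applied to $\coth(\pi u)$. A secondary but nontrivial point is the Poincar\'e justification of the termwise integration, which will require bounding the remainder of the binomial series against $1/(e^{2\pi t}-1)$ uniformly in $t$ so that the error term after truncation at order $K$ is $O(a^{-K-s})$ in the relevant parameter regime.
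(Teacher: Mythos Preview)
Your proposal is correct and follows essentially the same route as the paper: binomial-expand $(1-it/a)^{-s}$ in the Hermite integral (\ref{int3}), evaluate the resulting moments as $k!\,\zeta(k+1,1+iu)/(2\pi)^{k+1}$, handle $k=0$ separately, and reduce $\Im[i^k\zeta(k+1,1+iu)]$ to hyperbolic form. The only variation is in this last reduction---the paper computes the first few $C_k(u)$ symbolically (via \texttt{FunctionExpand}) and then states the peak-polynomial identity by induction on the recurrence (\ref{peak_number_recurrence}), whereas your Mittag-Leffler/cotangent-derivative approach is a more direct and self-contained way to produce that same recurrence.
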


\begin{proof}
We start from the integral representation (\ref{int3}). Application of the binomial theorem yields
\begin{equation*}
I = \Im \left( \int_0^{\infty} \frac{z^{-it}}{(1-it/a)^s}\frac{\mathop{dt}}{e^{2\pi t} - 1}\right) = \Im\left(\sum_{k=0}^{\infty} \frac{(s)_k}{k!}\left(\frac{i}{a}\right)^k \int_0^{\infty} \frac{z^{-it} t^k}{e^{2\pi t}-1} \mathop{dt}\right).
\end{equation*}

Let us focus on the inner integral $I_k$ defined as 
\begin{equation*}
I_k = \int_0^{\infty} \frac{z^{-it} t^k}{e^{2\pi t}-1} \mathop{dt} = \int_0^{\infty} \frac{z^{-it} t^k (1-e^{-t})}{(1-e^{-t})(e^{2\pi t}-1)} \mathop{dt}.
\end{equation*}
Noting that $(1-e^{-t}) / (e^{2\pi t}-1) = \frac{1}{2}(\coth(\pi t) - 1) (\sinh(t) -\cosh(t) + 1)$, we split the integral obtaining a closed form in terms of the Hurwitz zeta function
\begin{equation*}
I_k = k! \left(\frac{\zeta(k+1, i\log(z)/(2\pi))}{(2\pi)^{k+1}} - \frac{1}{(i\log(z))^{k+1}} \right) = \frac{k!}{(2\pi)^{k+1}}\zeta\left(k+1,1+i\frac{\log(z)}{2\pi}\right).
\end{equation*}
For $k=0$, $\zeta\left(k+1,1+i\frac{\log(z)}{2\pi}\right)$ has a pole, so we proceed as follows
\begin{equation*}
\Im\left(\int_0^{\infty} \frac{z^{-it}}{e^{2\pi t}-1} \mathop{dt}\right) = -\int_0^{\infty} \frac{\sin(\log(z)t)}{e^{2\pi t}-1}\mathop{dt} = \frac{1}{4} \left(\frac{2}{\log(z)} -\coth\left(\frac{\log(z)}{2}\right)\right).
\end{equation*}
Combining terms give us the asymptotic expansion for integral (\ref{int3})
\begin{equation*}
I \sim \frac{1}{4} \left(\frac{2}{\log(z)} - \coth\left(\frac{\log(z)}{2}\right)\right) + \sum_{k=1}^{\infty} \frac{(s)_k}{a^k (2\pi)^{k+1}} \Im\left(i^k \zeta\left(k+1,1+i\frac{\log(z)}{2\pi} \right) \right).
\end{equation*}

Hereinafter we use $u = \frac{\log(z)}{2\pi}$ to simplify notation. Let us define the terms $C_k(u)$ as
\begin{equation*}
C_k(u) = \Im\left(i^k \zeta\left(k+1,1+iu \right) \right)=\frac{i^{k+1}}{2} \left( (-1)^k \zeta\left(k+1,1-iu\right) - \zeta\left(k+1,1+iu \right)\right),
\end{equation*}
where we remove the imaginary part. In order to eliminate the computations on the complex plane for real $z$, we expand\footnote{We employ \texttt{FunctionExpand} in Mathematica \cite{Mathematica}.} $C_k(u)$ reducing compound arguments. The first five coefficients $c_k(u) = 2 C_k(u)$ are 
\begin{align*}
c_1(u) &= \frac{1}{u^2} - \pi^2 \csch(\pi u)^2,\\
c_2(u) &= \frac{1}{u^3} - \pi^3 \coth(\pi u)\csch(\pi u)^2,\\
c_3(u) &= \frac{1}{u^4} - \frac{\pi^4}{6} \left(4 \coth(\pi u)^2\csch(\pi u)^2 + 2\csch(\pi u)^4\right),\\
c_4(u) &= \frac{1}{u^5} - \frac{\pi^5}{24} \left(8 \coth(\pi u)^3\csch(\pi u)^2 + 16\coth(\pi u)\csch(\pi u)^4\right),\\
c_5(u) &= \frac{1}{u^6} - \frac{\pi^6}{120} \left(16 \coth(\pi u)^4\csch(\pi u)^2 + 88\coth(\pi u)^2\csch(\pi u)^4 + 16\csch(\pi u)^4\right).
\end{align*}
From the observation of previous coefficients, we state the following identity, which proof follows by induction
\begin{align*}
C_k(u) &= \frac{1}{2u^{k+1}} - \frac{\pi^{k+1}}{2k!} \sum_{j=0}^{\lceil k/2 \rceil - 1} P(k, j) \coth(\pi u)^{k-1-2j}\csch(\pi u)^{2(j+1)}\nonumber\\
&= \frac{1}{2u^{k+1}} - \frac{\pi^{k+1}}{2k!}  \coth(\pi u)^{k-1} \csch(\pi u)^2 \sum_{j=0}^{\lceil k/2 \rceil - 1}  P(k, j) \sech(\pi u)^{2j}\nonumber\\
& = \frac{1}{2u^{k+1}} - \frac{\pi^{k+1}}{2k!}  \coth(\pi u)^{k-1} \csch(\pi u)^2 \mathcal{P}_k(\sech(\pi u)^2).
\end{align*}
where $P(k, j)$ denotes the number of permutations of $k$ numbers with $j$ peaks, also known as \textit{peak number} or \textit{pk-number}, and $\mathcal{P}_k(x)$ a \textit{pk-polynomial}.
\end{proof}

Peak numbers $P(k, j)$ give the sequence \href{https://oeis.org/A008303}{A008303} of the OEIS \cite{OEIS}. For $k \ge 1$ and $0 \le j \le k$, we have a functional recursion generating a triangular array
\begin{equation}\label{peak_number_recurrence}
P(k, j) = 2(j+1) P(k-1, j) + (k-2j) P(k-1, j-1).
\end{equation}
Note that $P(k, j) = 0$ for $j \ge k/2$ and therefore $\deg P_k(x) =\lceil k/2 \rceil - 1$. Peak polynomials are given by the generating function for peak numbers $P(k,j)$.
\begin{equation*}
\mathcal{P}_k(x) = \sum_{j=0}^{\lceil k/2 \rceil - 1} P(k, j) x^j.
\end{equation*}
Note that for values of $|x| \to 1$ we can estimate its magnitude by the finite sum of peak numbers, since $\sum_{j=0}^{\lceil k/2 \rceil - 1} P(k, j) = k!$, hence
\begin{equation}
|\mathcal{P}_k(x)| \sim k!, \quad |x|\to 1.
\end{equation}

The bivariate exponential generating function can be defined as in \cite{Fewster2014}, \cite{Zhuang2016}
\begin{align*}
\sum_{k=0}^{\infty} \mathcal{P}_k(p) \frac{x^k}{k!} &= 1 - \frac{1}{p} + \frac{\sqrt{p-1}}{p} \tan \left(x \sqrt{p - 1} + \arctan(1/\sqrt{p-1})\right)\\
&= \frac{\sqrt{1-p} \cosh(x \sqrt{1-p})}{\sqrt{1-p} \cosh(x \sqrt{1-p}) - \sinh(x \sqrt{1-p})} = \frac{1}{\sqrt{1-p} \coth(x\sqrt{1-p}) - 1}
\end{align*}
As customary in analytic combinatorics, application of Cauchy's integral formula to the bivariate exponential generating function gives
\begin{equation*}
\mathcal{P}_k(p) = \frac{k!}{2\pi} \int_0^{2\pi} \frac{e^{-ikt}}{\sqrt{1-p} \coth(e^{it}\sqrt{1-p}) - 1} \mathop{dt}.
\end{equation*}

A remarkable result from the theory of enriched $P$-partitions is the functional relation between peak polynomials and Eulerian polynomials stated in \cite{Stembridge1997}
\begin{equation}\label{func_rela_eul_peak}
\mathcal{P}_k\left(\frac{4x}{(1+x)^2}\right) = \frac{2^{k-1}}{(1+x)^{k-1}}A_k(x),
\end{equation}
which allows us to use the upper bound in (\ref{eulerian_bound}) to estimate the truncation point in (\ref{asymp_large_z}). Furthermore, a good asymptotic estimate of $P_k(x)$ for large order $k$ can be derived from a Mittag-Leffler type decomposition of Eulerian polynomials \cite{Costin2009}:
\begin{equation*}
A_k(z) = C(k, z) \left(\frac{1}{\log(z)^{k+1}} + \sum_{j=1}^{\infty} \frac{1}{\left(\log(z) + 2\pi i j\right)^{k+1}} + \frac{1}{\left(\log(z) - 2\pi j k\right)^{k+1}}\right),
\end{equation*}
where 
\begin{equation*}
C(k, z) = \frac{e^{\pi i (k-1)}(1-z)^{k+1} k!}{z}.
\end{equation*}
Taking the prefactor of the expansion and applying the functional relation (\ref{func_rela_eul_peak}) we have
\begin{equation*}
\mathcal{P}_k(x) \sim \frac{2^{k-1} k! e^{\pi i (k-1)}(1-u)^{k+1}}{(1+u)^{k-1} u \log(u)^{k+1}}, \quad u = \frac{2-x-2\sqrt{1-x}}{x}, \quad k \to \infty.
\end{equation*}

\section{Algorithmic details and implementation}\label{section_algorithms}\label{section_algorithmic}

In this section we discuss in detail the implementation aspects and several proposed heuristics easy to evaluate while being effective in practice. All algorithms described are implemented in Python\footnote{\url{https://sites.google.com/site/guillermonavaspalencia/software/lerch.py}} using the mpmath library for arbitray-precision floating-point arithmetic \cite{mpmath} with GMPY2, which supports integer and rational arithmetic via the GMP library \cite{GMP} and real and complex arithmetic by the MPFR \cite{MPFR} and MPC \cite{MPC} libraries.

As it is well-known, numerical evaluation of special functions requires the use of several methods of computation to cover the whole regime of the parameters. We aim to sketch the building blocks of a basic algorithm, which have been tested to work reasonable well for most cases, but we do not dare to claim that it will cover the whole function's domain optimally. For those cases either not covered by current series expansions or prone to numerical instability, we select numerical complex integration, which serves as a backup method.

\subsection{Evaluation of L-series}

The L-series of the form (\ref{l_series}) are in general difficult to accelerate due to the non recursive scheme of computation. In order to employ common acceleration techniques such as parallelization, the determination of the optimal truncation level is crucial. As described in the previous section, a bound for the remainder term of the L-series can be constructed as follows
\begin{equation}
\left| \sum_{k=K}^{\infty} \frac{z^k}{(k+a)^s} \right| \le \frac{|z|^K }{|(K+a)^s| (1-C_K(z,s,a))} \le \frac{|z|^K }{|(K+a)^s| (1-|z|)}, 
\end{equation}
where
\begin{equation}
C_K(z,s,a) = \left|\frac{z (K+a)^s}{(K+1+a)^s}\right|, \quad \lim_{K\to\infty} C_K(z,s,a) = |z|.
\end{equation}

The required number of terms $K$ to obtain a result with $P$-bit precision can be obtained by performing a simple linear search, which is generally sufficient to target an absolute error of about $2^{-P}$. However, a more efficient approximation of $K$ is yielded by solving the following equation with the first omitted term, $z^K (K+a)^{-s} = 2^{-P}$ for $K$. The first solution in closed form is given by
\begin{equation*}
K = -\frac{sW(\phi(z, s, a, P)) + a\log(z)}{\log(z)}, \quad \phi(z, s, a, P) = -\frac{(2^{-P} z^{a})^{-1/s} \log(z)}{s}.
\end{equation*}

We distinguish two different approximations for $K$, denoted as $\hat{K}$, depending on $\Re(s)$. For $\Re(s) > 0$
\begin{equation*}
\hat{K} = \left[\left|\frac{\Re(s) W_0(\phi_1(z, s, a, P)) + |a|\log(|z|)}{\log(|z|)}\right|\right],
\end{equation*}
where $[x]$ denotes the nearest integer function and
\begin{equation*}
\phi_1(z, s, a, P) = -\frac{(2^{-P} |a|^{-\Re(s)-1} |z|^{|a|})^{-1/(\Re(s)+1)} \log(|z|)}{\Re(s)+1}.
\end{equation*}
For $\Re(s) < 0$
\begin{equation*}
\hat{K} = \left[\left|\frac{\Re(s) W_{-1}(\phi_2(z, s, a, P)) + |a|\log(|z|)}{\log(|z|)}\right|\right],
\end{equation*}
where
\begin{equation*}
\phi_2(z, s, a, P) = -\frac{(2^{-P} |a|^{-\Re(s)} |z|^{|a|})^{-1/\Re(s)} \log(|z|)}{\Re(s)}.
\end{equation*}

Given that $\phi_1$, $\phi_2 \in \mathbb{R}$, we use the principal branch $W_0(z) = W(z)$ when $\Re(s) > 0$, since $\phi_1 > 0$ and the branch $W_{-1}(z)$ for $\Re(s) <0$, since $\phi_2 \in (-1/e, 0)$. Remember that $W(x)$ is two-valued for $-1/e \le x < 0$. Numerical tests suggest that these approximations for choosing $K$ are sufficient to obtain good estimates of the required number of terms. We note that $\hat{K}$ can be computed using 53-bit machine floating-point arithmetic.

Several heuristics are implemented to compensate catastrophic cancellation for cases when $\Re(z) < 0$ and/or $\Re(s) < 0$. In particular, for $\Re(s) <0$ we increase the working precision $P^{W} = P + \lfloor P /3 \rfloor + [-\Re(s)]$. On the other hand, for the case $\Re(s) > 1$ and $z \in \mathbb{R}_{<0}$ we employ the linear acceleration methods for alternating series described in \cite{Cohen2000}. This method is used when $\hat{K} > 1.2 [1.31 D]$, where $D$ is the precision digits. For all other cases, we add up to 20 guard bits to the working precision.

The computation of the L-series is particularly simple to parallelize by assigning a block of size $k: k \le N$ to each thread. This parallelization scheme is implemented using the \texttt{multiprocessing} module in Python. Based on experiments, parallelization provides a significant speedup factor for $\hat{K} > 1024$ or $P \ge 1024$ bits.

We remark that L-series converges rather slowly when $|z| \to 1$. It is possible to employ convergence acceleration techniques to obtain an efficient evaluation of the Lerch transcendent; see the application of combined nonlinear-condensation transformation in \cite{Jentschura1999}. Alternatively, the Euler-Maclaurin formula is also convenient for those cases, as shown later.

\subsection{Evaluation of Euler-Maclaurin formula}

\subsubsection{Evaluation of the error bound}

For a precision of $D$ digits, we choose $N = \lfloor D / 3 \rfloor$. For large $\Re(a) > 0$ we choose $N=0$ if the following condition is satisfied
\begin{equation*}
\Re(a) > |\Re(s)| + |\Re(z)| + D.
\end{equation*}

The number of terms $M$ can be effectively approximated by solving $t_k = 2^{-P}$, where $P$ is the precision in bits and $t_k$ is the asymptotic estimate in (\ref{tk_asymp}), which yields
\begin{equation*}
M \sim \left[\frac{1}{2}\left|\frac{\log(2^{-P-1} \log(z))}{\log(2\pi) - \log(\log(z))}\right|\right].
\end{equation*}
This is a near-optimal approximation at high-precision. In practice, the asymptotic estimate of $M$ is used for $P \ge 500$, otherwise we use the heuristic $M = N + \lfloor P / 3 \rfloor$. There is a unavoidable trade-off when choosing $N$ and $M$, large values results in catastrophic cancellation since the L-series might be unstable, especially for $z \in \mathbb{C}\setminus \mathbb{R}$, but reduces the number of terms $M$, therefore the time spent computing Bernoulli numbers, which represents a significant amount of the total computation time.

We can evaluate the coefficient in the error bound (\ref{remainder_bound}) using a recurrence. Computation of $Q(k+1-2M-s, -(a+N)\log(z))$ only requires the initial value $Q(1-s, -(a+N)\log(z))$, which can be computed re-using $\Gamma(1-s, -(a+N)\log(z))$ in (\ref{em_tail}). Subsequent terms can be computed at lower precision via the recurrence
\begin{equation*}
Q(a, z) = Q(a-1, z) + \frac{e^{-z} z^{a-1}}{\Gamma(a)}, \quad a \in \mathbb{C}\setminus\mathbb{Z}^-,
\end{equation*}
or 
\begin{equation*}
\Gamma(a, z) = (a-1)\Gamma(a-1,z) + e^{-z} z^{a-1}, \quad a \in \mathbb{C}.
\end{equation*}
A recurrence for the rest of terms in the coefficients is trivial. The direct evaluation of the recurrence requires $\mathcal{O}(M)$ arithmetic operations, therefore the associated computational cost is not negligible for very large $M$, as previously mentioned. 

For large $M$ we might use asymptotic estimates to reduce the complexity, for example the case $M > |(a+N)\log(z)|$
\begin{align}
|R| &\sim \frac{4}{(2\pi)^{2M}} \left|(-2M+1-s)_{2M} E_{2M + s}(-(a+N)\log(z)) \frac{(a + N)^{-2M - s + 1}}{z^{a+N}}\right| \nonumber\\
& \sim \frac{4}{(2\pi)^{2M}} \left| \frac{(-2M+1-s)_{2M}}{2M+s} \frac{(\log(z)/z)^{a+N}}{(a+N)^{2M+s-1}}  \right|, \label{approx_M_1}
\end{align}
where $E_{\nu}(z)$ is the generalized exponential integral \cite{Navas-Palencia2018a} and we take the asymptotic estimate $E_{\nu}(z) \sim e^{-z}/\nu$ as $\nu \to \infty$. 

For $M \sim |(a+N)\log(z)|$ and $|s| \ll M$, we use the first order estimate of the Franklin-Friedman expansion for $U(a, b, z)$ in \cite{Navas-Palencia2018b} given by
\begin{equation*}
U(-2M, -2M+1-s, -(a+N)\log(z)) \sim \left(1 + \frac{2M}{(a+N)\log(z)}\right)^{-s}((a+N)\log(z))^{2M},
\end{equation*}
replacing it in (\ref{remainder_bound}) and after observation that the remaining integral is expressible in terms of the incomplete gamma function we obtain 
\begin{equation}\label{approx_M_2}
|R| \sim \frac{4}{(2\pi)^{2M}} \left|\frac{(-\log(z))^{2M+s-1}}{3^{\min(\Re(s), 0)}z^a} \Gamma(1-s, -(a+N)\log(z) \right|.
\end{equation}

Table \ref{table_bound_remainder_em} shows a few examples when $|z| < 1$, otherwise the integral (\ref{remainder}) is not well defined. Approximations (\ref{approx_M_1}) and (\ref{approx_M_2}), although being quite simple, might be used to compute a crude estimate of the magnitude of the remainder in reasonable time.
\begin{table}[H]
	\centering
	\scalebox{0.85}{
\begin{tabular}{ccccccccc}
\hline
$z$ & $s$ & $a$ & $N$ & $M$ &  (\ref{remainder}) & (\ref{remainder_bound}) & (\ref{approx_M_1}) & (\ref{approx_M_2})\\
\hline
	0.8 & 3.2 & 10.5 & 6 & 15 & $7.5e{-30}$ & $1.5e{-28}$ & $1.3e{-38}$ & $1.1e{-47}$\\
	0.5 + 0.2i & -30.2 -i & 10.5 +5i & 10 & 40 & $4.8e{-24}$ & $1.0e{-22}$ & $8.1e{-37}$ & $3.2e{-20}$\\
	0.5 + 0.2i & -30.2 -i & 100.5 +5i & 100 & 2000 & $2.5e{+302}$ & $5.6e{+302}$ & $3.9e{+282}$ & $-$\\
	0.5 + 0.7i & -3.2 +10i & 10.5 + 5i & 250 & 300 & $1.6e{-503}$ & $4.8e{-502}$ & $1.5e{-503}$ & $5.4e{-496}$\\
	0.5 + 0.7i & 30.2 +10i & 10.5 + 10.5i & 600 & 700 & $9.9e{-1240}$ & $1.2e{-1238}$ & $7.5e{-1216}$ & $3.8e{-1264}$\\
	\hline\\
	\end{tabular}}
	\caption{Effectiveness of bound (\ref{remainder_bound}) in error term of the Euler-Maclaurin formula.}
	\label{table_bound_remainder_em}
\end{table}

\subsubsection{Evaluation of the tail}
A more interesting form of the tail (\ref{em_tail}) is obtained by applying Kummer's transformation to $U(-2k + 1, -2k+2-s, -(a+N)\log(z))$, thus
\begin{equation*}
T = z^N \bigg(\frac{1}{2(a+N)^s} + (-\log(z))^s\sum_{k=1}^{M} \frac{B_{2k}}{(2k)!} (-\log(z))^{2k-1} U(s, s+2k, -(a+N)\log(z))\bigg).
\end{equation*}
For this particular case, $U(a, b, z)$ reduces to a polynomial in $-(a+N)\log(z)$ of degree $2k-1$, indeed expressible in terms of generalized Laguerre polynomials, given by
\begin{equation*}
U(s, s+2k, -(a+N)\log(z))) = (-(a+N)\log(z))^{-s} \underbrace{\sum_{j=0}^{2k-1} \binom{2k - 1}{j} \frac{(s)_j}{(-(a+N)\log(z))^j}}_{T_2^{(k)}}.
\end{equation*}
Terms $T_2^{(k)}$ can be constructed using a linear holonomic recurrence equation. Let us define the constants expressions $p$ and $q$
\begin{equation*}
p = s - (a + N)\log(z)), \quad q = -\frac{1}{(a+N)\log(z)}.
\end{equation*}
The sequence of terms $T_2^{(k)}$ satisfy the recurrence equation
\begin{align*}
T_1^{(k)} &= [p T_2^{(k-1)} + (2k - 3) (T_2^{(k-1)} - T_1^{(k-1)})] q\\
T_2^{(k)} &= [p T_1^{(k)} + (2k - 2) (T_1^{(k)} - T_2^{(k-1)})] q
\end{align*}
for $k \ge 2$, with initial values 
\begin{equation*}
T_1^{(1)} = 1, \quad T_2^{(1)} = 1 - \frac{s}{(a+N)\log(z)}.
\end{equation*}
A matrix form for $k \ge 2$ is defined as 
\begin{equation*}
\begin{pmatrix}
T_1^{(k)} \\
T_2^{(k)} \\
T_2^{(k-1)}
\end{pmatrix} = 
\begin{pmatrix}
q(p + 2k -3) & q(3-2k) & 0\\
q (2 - 2k) & 0 & q(p+2k-2)\\
1 & 0 & 0 \\
\end{pmatrix}
\begin{pmatrix}
T_2^{(k-1)} \\
T_1^{(k-1)} \\
T_1^{(k)}
\end{pmatrix}
\end{equation*}
or simply
\begin{equation*}
\begin{pmatrix}
T_2^{(k)} \\
T_1^{(k)} \\
\end{pmatrix} = q
\begin{pmatrix}
k(4k + 4p -12) + (p-5)p + 8 & k(-4k -2p + 10) + 3p -6\\
3-2k & p+2k-3\\
\end{pmatrix}
\begin{pmatrix}
T_2^{(k-1)} \\
T_1^{(k-1)} \\
\end{pmatrix}.
\end{equation*}

The complexity of the recurrence scheme is $\mathcal{O}(M P)$ and requires a small temporary storage. Note that a matrix recurrence for the sequence of coefficients $T_*^{(k)}$ is suitable in a binary splitting scheme. The previous analysis results in a more tractable expression for the tail $T$
\begin{equation}
T = \frac{z^N}{(a+N)^s} \bigg(\frac{1}{2} + \sum_{k=1}^{M} \frac{B_{2k}}{(2k)!} (-\log(z))^{2k-1} T_2^{(k)}\bigg).
\end{equation}

Now the terms of tail sum $T$ satisfy a recurrence equation except for the multiplication by Bernoulli numbers. The Bernoulli numbers are cached for repeated evaluation, but computing them the first time at very high precision is time-consuming. We do not attempt to improve current implementations but rather rely on the algorithm implemented in mpmath, which automatically caches Bernoulli numbers $B_n$ when $n < 3000$ for multiple evaluations. For larger values of $n$ the connection to Riemann zeta function $\zeta(n)$ is used. Many recursive algorithms for computing $B_0, \ldots, B_n$ such as $B_n = - \sum_{k=0}^{n-1} \frac{B_k}{k!(n-k+1)}$ require $\mathcal{O}(n^2)$ arithmetic operations. As an alternative, an algorithm based on recycling terms in the Riemann zeta function series expansion, which also have cubic complexity, is implemented in \cite{arb}.

\subsection{Evaluation of asymptotic expansions}

The main drawback of the asymptotic expansions in (\ref{uae_lerch}) and (\ref{asymp_large_z}) is the difficulty of computing a large number of Eulerian and peak polynomials efficiently.
Computing the first $k$ Eulerian polynomials simultaneously can be performed by using the recursion in (\ref{eulerian_recursion}). Thus, given $A_{0}(z), \ldots,A_{k-1}(z)$, we can compute $A_k(z)$ in $\mathcal{O}(k)$ arithmetic operations, and noting that $A_k(z)$ has $\mathcal{O}(k \log k)$ bits from (\ref{eulerian_bound}), the algorithm needs $\mathcal{O}(k^{3+ o(1)})$ bit operations and requires $\mathcal{O}(k^2 \log k)$ space to store previous $A_j(z), j < k$. For example, using a straightforward implementation, we compute $A_0(2), \ldots, A_{1000}(2)$ at 333-bit precision in 1.51 seconds on a 2.6 GHz Intel i7 processor.  

To compute $k$ Eulerian polynomials in time complexity $\mathcal{O}(k^{2+ o(1)})$ we might apply a \textit{multisectioning} scheme to the bivariate exponential generating function. Alternatively, it is possible to recycle terms of the sum (\ref{eulerian_convergent_series}) to speedup multievaluation, considering that terms $j^k z^j$ can be optimized to only compute binary exponentiation when $j$ is prime and multiplication otherwise. The required number of terms is approximated by solving $J^k z^J = 2^{-P}$,
\begin{equation}
J^* \approx \left[\left| \Re\left(-\frac{k W_{-1} \left(-\frac{(2^{-P})^{1/k} \log(|z|)}{k}\right)}{\log(|z|)}\right)\right|\right].
\end{equation}

For large $k$ the size of $J^*$ growth rapidly, therefore it is convenient to apply asymptotic faster methods such as the Mittag-Leffler type decomposition introduced in \S 2.3, which acts as an asymptotic expansion. For $z \in \mathbb{R}$, two optimizations can be implemented:
\begin{equation*}
A_k(z) = \frac{(z-1)^{k+1} k!}{z} \Re\left(\frac{1}{\log(z)^{k+1}} + 2\sum_{j=1}^{\infty} \frac{1}{\left(\log(z) + 2\pi i j\right)^{k+1}}\right), \quad z \in \mathbb{R}^+,
\end{equation*}
\begin{equation*}
A_k(z) = \frac{(z-1)^{k+1} k!}{z} \Re\left(\frac{1}{\log(z)^{k+1}} +\sum_{j=1}^{\infty} \frac{1}{\left(\log(z) - 2\pi i j\right)^{k+1}}\right),\quad z \in \mathbb{R}^-.
\end{equation*}
To compute a single $A_{1000}(1/5)$ at 333-bit, the power series requires $J^* = 5545$ whereas the Mittag-Leffler decomposition only needs $J^* = 59$ terms, hence a complete algorithm shall combine the iterative computation via the three-term recurrence and the asymptotic expansion as $k\to \infty$.

Like other orthogonal polynomials, polynomials $P_k(\lambda)$ in (\ref{tricomi_carlitz}), which are strongly related to Tricomi-Carlitz polynomials, satisfy three-term recurrence,
\begin{equation*}
P_0(\lambda) = 1, \quad P_1(\lambda) = 0, \quad \text{and} \quad P_{k+1}(\lambda) = k(P_k(\lambda) + \lambda P_{k-1}(\lambda)), \; k > 1.
\end{equation*}
Given the complexity of computing a large number of Eulerian polynomials, the cost of the three-term recurrence is almost negligible.

The truncation level $K$ in (\ref{uae_remainder_bound}) is computed at lower precision via linear search and $C$ is estimated as
\begin{equation*}
C = \left|\frac{t_{K+1}}{t_K}\right| \sim \left|\frac{z}{z-e^{\mu}}\frac{(s+K) (z-1)}{a\log(z)}\right|, \quad k \to \infty,
\end{equation*}
from which we obtain an estimate of the number of terms
\begin{equation*}
K_{max} \approx \left| \frac{a \log(z) (z-e^{\mu})}{z(z-1)} - s\right|.
\end{equation*}

Computation of peak polynomials is carried out using the generating function for peak numbers for moderate $k$, which evaluation only involves roughly half of the terms $k/2$ compared to the Eulerian polynomials. For example, computing the triangular array for the first 1000 peak numbers using recurrence (\ref{peak_number_recurrence}) takes 1.55 seconds.
For larger $k$ the functional relation with the Eulerian polynomial is applied. 
A trickier aspect of the asymptotic expansion (\ref{asymp_large_z}) is to determine the optimal truncation $K$. The coefficients $c_k(u)$ behave as
\begin{equation*}
|c_k(u)| = \left|\frac{1}{u^{k+1}} - \frac{\pi^{k+1}}{k!}  \coth(\pi u)^{k-1} \csch(\pi u)^2 \mathcal{P}_k(\sech(\pi u)^2)\right| \sim \frac{1}{|1 -i u|^k},
\end{equation*}
as $k \to \infty$. Given the ratio of convergence of the asymptotic expansion, we can estimate the maximum number of terms $K_{max}$, thus the maximum attainable accuracy as follows
\begin{equation*}
\left|\frac{t_{K+1}}{t_K}\right| \sim \left|\frac{(s+K)}{a 2 \pi (1 -i u)}\right|, \quad k \to \infty, \quad K_{max} \approx |a(2\pi - i \log(z)) - s|.
\end{equation*}
It remains to estimate the required numbers of terms $K$ to target $P$-bit accuracy, which is approximated heuristically and subsequently refined via linear search using
\begin{equation*}
K \approx \left[\left|\frac{\varphi(s)}{W_{-1}(\frac{\varphi(s)}{a 2 \pi \log(z)})}\right|\right],
\end{equation*}
where $\varphi(s) = -\log(2) P - \log(1+s)$. In fact, we slightly increase $K$ by a factor $\approx 1.2$, which works well in practice. Hence, we can evaluate the asymptotic expansion as long as $K \le K_{max}$ to target an absolute error of $2^{-P}$.

\subsection{Numerical integration}

The current implementation in mpmath computes the Lerch transcendent via numerical integration using the double-exponential method for the integral representation (\ref{int2}) employing the \texttt{quad} function. We choose numerical integration for $\Re(a) > 0$ as a backup method when computation by aforementioned methods is not satisfactory. Note that a few optimizations are possible for real parameters by rewriting the integrand, for example, the integral representation (\ref{int3}) when $z > 0$ and $a > 0$ or
\begin{equation*}
I = -\Im \left( \int_0^{\infty} \frac{(1-it/a)^s z^{it}}{(a^2 + t^2)^{s/2} (1+t^2/a^2)^{s/2}}\frac{\mathop{dt}}{(e^{2\pi t} - 1)} \right), \quad z, s, a\in \mathbb{R}, \, z > 0,
\end{equation*}
both integral representations avoiding evaluation of trigonometric functions. The computation at high-precision, say 1000 digits onwards, is generally costly compared to asymptotic methods, therefore this is the method of choice when only strictly indispensable.

\section{Benchmark}

In this Section, we benchmark our implementation to current state-of-the-art software supporting evaluation of the Lerch transcendent function to arbitrary-precision. Tests were conducted on an Intel(R) Core(TM) i7-6700HQ CPU at 2.60GHz, using up to 4 cores for parallel mode, running Ubuntu Linux. We compare the computing times of Mathematica 10.4 and mpmath 1.0.0 using functions \texttt{Timing[]} and \texttt{time.perf\_counter()}, respectively. For mpmath we set the precision in bits $p$ using \texttt{mpmath.mp.prec = p}, whereas for Mathematica the desired level of precision in digits $d$ is set with \texttt{N[..., d]}, applying the conversion factor $d = \lfloor 0.301 p \rfloor$. To assess the correctness of our implementation, we compare to Mathematica at higher precision since it is frequently faster and more reliable than mpmath. Note, however, that Mathematica attempts to achieve $d$ digits of precision might fail unexpectedly, therefore we check the consistency of results at increasing levels of precision. 

The following tables show timing results to compute the Lerch transcendent for various regimes of the parameters and argument, varying the level of precision. We remark that Mathematica and mpmath use GMP internally, so timing measurements are directly comparable.

Table \ref{table_lerch_em} shows the performance of the Euler-Maclaurin formula (\ref{int_u_expan_final_form}) for small $z$ and moderate values of $s$ and $a$. The Euler-Maclaurin formula is implemented in a loop manner checking the level of cancellation at each iteration and increasing the working precision accordingly to correct it. Hence, a better estimation of the total amount of cancellation would reduce the computation time considerably. However, as we see for these cases, both Mathematica and mpmath are regularly an order of magnitude slower. Furthermore, as previously noted, larger values of $|a|$ improve the convergence of the series, reducing significantly the number of terms $N$ and $M$; Table \ref{table_lerch_em_N_M} shows the metrics corresponding to the last iteration.

\begin{table}[H]
	\centering
	\scalebox{0.85}{
\begin{tabular}{cccccc}
\hline
$\Phi(z,s, a)$ & bits & mpmath & Mathematica & Euler-Maclaurin & Parallel\\
\hline
	& 64 &  0.096 (0.139) & 0.313 & 0.008 (0.013) & -\\
	$z = 2.5 + 1.5i$ & 333 & 1.09 (1.21) & 0.672 & 0.041 (0.089) & -\\
	$s = 1.25 + 2i$ & 1024 & 8.39 (9.38) & 3.14 & 0.128 (0.233) & -\\
	$a = 3.5 + 5i$ & 3333 & 154.4 (161.1) & 27 & 1.64 (2.37) & -\\
	& 10000 &  1564.6 & 438 & 25.04 (33.33) &  19.06 (27.32) \\
	\hline
	& 64 & 0.263 (0.295) & 0.047 & 0.016 (0.039) & -\\
	$z = 2.5 +7.5i$ & 333 & 1.79 (1.98) & 0.250  & 0.110 (0.250) & - \\
	$s =-50.25 +10i$ & 1024 & 6.59 (7.05) & 1.58 & 0.72 (1.49) & -\\
	$a = 1.5 -i$ & 3333 & 133.6 (135.1) & 21.66 & 11.83 (16.64) & 8.72 (13.22)\\
	& 10000 & 1453 & 409.1 & 190.3 (224.5) & 153.2 (196.5) \\
	\hline
	& 64 & 0.253 (0.324) & 0.031 & 0.013 (0.022) & -\\
	$z = 2.5 +0.5i$ & 333 & 1.79 (1.92) & 0.265 & 0.058 (0.104) & -\\
	$s =-100.25 +10i$ & 1024 & 12.66 (13.94)  & 7.72 & 0.298 (0.685) & -\\
	$a = 100.5 -10i$ & 3333 & 120.7 (127.1) & 83.14 & 2.97 (4.19) & -\\
	& 10000 & 1500.6 & $>$ 1800  & 24.65 (32.89) & 18.88 (28.34)\\
	\hline\\
	\end{tabular}}
	\caption{Time (in seconds) to compute $\Phi(z,s,a)$ with moderate values of $z$, $s$ and $a$ to 64, 333, 1024, 3333 and 10000 bits of precision. First evaluation pre-computing Bernoulli numbers within parentheses. Maximum time 1800 seconds.}
	\label{table_lerch_em}
\end{table}

Current implementation does not incorporate complexity-reducing methods for the evaluation of the tail but simply uses the recurrence scheme and only includes optional parallelization of the truncated L-series, thus limiting the observable improvement by activating the parallel mode.

\begin{table}[H]
	\centering
	\scalebox{0.85}{
\begin{tabular}{ccccc}
\hline
$\Phi(z,s, a)$ & bits & N & M & $P^{W}$(bits)\\
\hline
	& 64 &  7 & 32 (H) & 86\\
	$z = 2.5 + 1.5i$ & 333 & 39 & 172 (H) & 400\\
	$s = 1.25 + 2i$ & 1024 & 123 & 247 (A) & 1229\\
	$a = 3.5 + 5i$ & 3333 & 401 & 805 (A) & 4000\\
	& 10000 & 1203 & 2416 (A) & 12000\\
	\hline
	& 64 & 28 & 122 (H) & 84\\
	$z = 2.5 +7.5i$ & 333 & 127 & 402 (A) & 383\\
	$s =-50.25 +10i$ & 1024 & 364 & 1146 (A) & 1094\\
	$a = 1.5 -i$ & 3333 & 1115 & 3503 (A) & 3346\\
	& 10000 & 3193 & 10032 (A) & 10419\\
	\hline
	& 64 & 10 & 46 (H) & 108\\
	$z = 2.5 +0.5i$ & 333 & 52 & 97 (A) & 528\\
	$s =-100.25 +10i$ & 1024 & 156 & 285 (A) & 1561\\
	$a = 100.5 -10i$ & 3333 & 480 & 876 (A) & 4790\\
	& 10000 & 1214 & 2216 (A) & 12108\\
	\hline\\
	\end{tabular}}
	\caption{Number of terms $N$, $M$ in the Euler-Maclaurin expansion and working precision $P^{W}$ for Euler-Maclaurin cases. (A) and (H) indicate the method used to estimate $M$, asymptotic and heuristic, respectively.}
	\label{table_lerch_em_N_M}
\end{table}

Table \ref{table_lerch_series} assesses the performance of the L-series implementation and its particular cases for $|z| < 1$ and small values of $s$ and $a$. Due to the performance gap between Mathematica and mpmath (mpmath only implements numerical integration), only the former is used for benchmarking on subsequent tests. Results show that our implementation is comparable to Mathematica (presumably evaluating the same L-series) at lower precision and it is found to be surprisingly faster at higher precision\footnote{We guess the poor performance is due to incorrect error tracking, which overestimates the required working precision.}. Moreover, we observe that our parallelization scheme achieves speedup ratios close to theoretical maximum, which apparently is  not implemented in Mathematica. Interestingly, the Euler-Maclaurin formula should be the preferred algorithm at low-medium precision when $|z| \sim 1$. 
\begin{table}[H]
	\centering
	\scalebox{0.85}{
\begin{tabular}{cccccc}
\hline
$\Phi(z,s, a)$ & bits & Mathematica & L-series & Parallel & Euler-Maclaurin\\
\hline
	           & 64   & 0.0011  & 0.0011 & - & -\\
	$z = 1/4$  & 333  & 0.0067  & 0.0055 & - & -\\
	$s = 10/4$ & 1024 & 0.0339  & 0.0155 & - & -\\
	$a = 20/7$ & 3333 & 0.7313  & 0.0660 & 0.0431 & -\\
	           & 10000 & 21.406 & 0.4885 & 0.1513 & -\\
	\hline
	           & 64   & 0.0015  & 0.0069 & - & 0.0031 (0.0053)\\
	$z = 9/10$ & 333  & 0.0109  & 0.0607 & 0.0429 & 0.0153 (0.0255)\\
	$s = 10/4$ & 1024 & 0.0984  & 0.2165 & 0.1052 & 0.0166 (0.0222)\\
	$a = 20/7$ & 3333 & 1.8843  & 1.0422 & 0.3281 & 0.0953 (0.1197)\\
	           & 10000 & 39.937  & 8.6969 & 2.4881 & 2.7331 (3.1578)\\
	\hline
	           & 64   & 0.0031  & 0.0020 & - & -\\
	$z =-6/10$ & 333  & 0.0156  & 0.0083 & - & -\\
	$s = 10/4$ & 1024 & 0.1422  & 0.0243 & - & -\\
	$a = 20/7$ & 3333 & 3.0922  & 0.0983 & - & -\\
	           & 10000 & 21.2969 & 0.7500 & - & -\\
	\hline
	\end{tabular}}
	\caption{Time (in seconds) to compute $\Phi(z,s,a)$ for small argument $|z|$.}
	\label{table_lerch_series}
\end{table}

The third example assesses the performance of the series acceleration technique for alternating series, which while it is hardly parallelizable, it is consistently faster than Mathematica for all tested instances.

Table \ref{table_large_z} shows the time to compute the Lerch transcendent using the asymptotic expansion (\ref{asymp_large_z}) for large $z$ and $a$. 
The optimal truncation of the first test is $K_{max} = 1598$, limiting the evaluation at 3333 bits of precision, which would require $K = 2767$. The optimal truncation for the second test is $K_{max} = 22297$ requiring up to 1952 terms at 10000 bit of precision. As noted, the time spent on the computation of a large number of peak polynomials accounts for a significant amount of the total time, therefore a more sophisticated and efficient algorithm would be needed at higher precision. On the other hand, for multiple evaluations, peak numbers can be cached same as Bernoulli numbers.

Numerical experiments show a performance deterioration of the Euler-Maclaurin formula as $z$ increases due to catastrophic cancellation, therefore its use should be restricted to low precision calculations. Our implementation of the asymptotic expansion exhibits fast convergence for large parameters, but the limitation on the achievable accuracy forces a switch to numerical integration depending on the desired level of precision.

\begin{table}[H]
	\centering
	\scalebox{0.85}{
\begin{tabular}{cccccccc}
\hline
$\Phi(z,s, a)$ & bits & mpmath & Mathematica & Euler-Maclaurin & Asymptotic & $K$ & peak time\\
\hline
	$z = 140$ & 64    & 0.0684  & 0.0154 & 0.0027 & 0.0022 & 9 & 6.3\%\\
	$s = 1/4$ & 333   & 0.7859  & 0.1219 & 2.2342 & 0.0134 & 59 & 10.9\%\\
	$a = 200$ & 1024  & 5.0361  & 0.7297 & - & 0.1931 & 254 & 10.5\%\\
	\hline
	           & 64   & 0.1238  & 0.0661 & - & 0.0017 & 6 & 4.6\%\\
	$z =10000$ & 333  & 1.5456  & 0.2078 & - & 0.0066 & 34 & 11.5\%\\
	$s = 10/4$ & 1024 & 9.9478  & 1.0406 & - & 0.0481 & 122 & 10.0\%\\
	$a = 2000$ & 3333 & 94.362  & 15.141 & - & 0.9498 & 493 & 8.4\%\\
	           & 10000 &1978.2  & 283.21 & - & 39.779 & 1952 & 6.0\%\\
	\hline
	\end{tabular}}
	\caption{Time (in seconds) to compute $\Phi(z,s,a)$ for large parameter $a$ and argument $z$. Comparison to Euler-Maclaurin at low precision. The rightmost column shows the percentage of the total time devoted to computation of $K$ peak numbers.}
	\label{table_large_z}
\end{table}

Finally, Table \ref{table_uae} compares the uniform asymptotic expansion (\ref{uae_lerch}) to the asymptotic expansion (\ref{asymp_large_z}). Results show that the former expansion should be the preferred choice at low-medium precision for sufficiently large parameters and argument, otherwise the previous methods generally show superior performance. Note that the computation of Eulerian polynomials accounts for the majority of the total time, consequently any improvement on this respect will directly reduce the reported timings.

\begin{table}[H]
	\centering
	\scalebox{0.85}{
\begin{tabular}{ccccccc}
\hline
$\Phi(z,s, a)$ & bits & mpmath & Mathematica & Asymptotic & Uniform & Eulerian time\\
\hline
	$z = -200.65$ & 64 & 0.0228* & 0.0469 & 0.0031 (16) & 0.0173 (20) & 97.0\%\\    
	$s = 100.25$ & 333 & 0.0149* & 0.1563 & 0.0412 (104) & 0.0592 (60) & 97.5\%\\
	$a = 501.5$ & 1024 & 1.0219** & 0.8438 & 0.7512 (421) & 0.5151 (229) & 98.8\%\\
	\hline
	$z = -20000$ & 64 & 0.0101* & 0.0312 & 0.0027 (15) & 0.0051 (9) & 93.2\%\\    
	$s = 100.25$ & 333 & 0.0168* & 0.1875 & 0.0362 (93) & 0.0436 (53) & 96.9\%\\
	$a = 501.5$ & 1024 & 1.7736 & 1.0313 & 0.5424 (365) & 0.2964 (196) & 98.3\%\\
	\hline
	\end{tabular}}
	\caption{Time (in seconds) to compute $\Phi(z,s,a)$ for large parameter $a$ and $s$, and argument $z$. Number of terms for each expansion within parentheses. For mpmath: (*) and (**) indicate no answer and inaccurate answer, respectively.}
	\label{table_uae}
\end{table}

\section{Discussion}
The algorithms presented in this work are an important step towards a complete arbitrary-precision implementation of the Lerch transcendent using asymptotically fast methods. A fundamental improvement to our implementation is to devise a more intelligent strategy to address cancellation issues for the Euler-Maclaurin formula, which should yield a significant reduction of the current overhead factor.

Further work is needed to develop an efficient multithreaded implementation of the asymptotic expansions. More importantly, it remains an open problem whether there is a fast memory-efficient algorithm for computing a large number of Eulerian and peak polynomials.

\end{document}